\theoremstyle{plain}
\newtheorem{thm}{Theorem}[section]
\newtheorem{cor}[thm]{Corollary}
\newtheorem{lem}[thm]{Lemma}
\newtheorem{prop}[thm]{Proposition}
\newtheorem{rem}[thm]{Remark}
\theoremstyle{definition}
\newtheorem{definition}[thm]{Definition}
\newtheorem{ex}[thm]{Example}
\theoremstyle{remark}
\newtheorem{remark}[thm]{Remark}
\DeclareMathOperator{\LIM}{LIM}
\DeclareMathOperator{\supp}{supp}
\DeclareMathOperator{\almost}{--a.e.}
\newcommand{\R}{\mathbb R}
\newcommand{\N}{\mathbb N}
\def\d{{\rm d}}
\begin{document}

\title[Proximality, Stability...]{Proximality, stability, and central limit theorem for random maps on an interval}

\author[S.C. Hille]{Sander C. Hille}
\address{Mathematical Institute, Leiden University, P.O. Box 9512, 2300 RA Leiden, The Netherlands (SH)}
\email{shille@math.leidenuniv.nl}

\author[K. Horbacz]{Katarzyna Horbacz}
\address{Institute of Mathematics,
University of Silesia, Bankowa 14,
40-007 Katowice, Poland (KH)}
\email{katarzyna.horbacz@us.edu.pl}
\author[H. Oppelmayer]{Hanna Oppelmayer}
\address{Institut f\"ur Mathematik und Geometrie, Universit\"at Innsbruck, Technikerstrasse 13, A--6020 Insbruck, Austria (HO)}
\email{Hanna.Oppelmayer@uibk.ac.at}
\author[T. Szarek]{Tomasz Szarek}
\address{Institut of Mathematics Polish Academy of Sciences, Abrahama 18, Sopot, Poland (TS)}
\email{tszarek@impan.pl}

\thanks{T.S. was supported by the Polish NCN Grant 2022/45/B/ST1/00135.}
\thanks{H.O. was partially supported by Early Stage Funding-Vicerector for research, University of Innsbruck.}

\date\today

\maketitle

{\centering\footnotesize {\it To the memory of Professor Tadeusz Figiel (1948--2025)} \par}

\begin{abstract}  Stochastic dynamical systems consisting of non-invertible continuous maps on an interval are studied. It is proved that if they satisfy the recently introduced so-called $\mu$--injectivity and some mild assumptions, then proximality in the sense of Furstenberg, asymptotic stability of the corresponding Markov operator, synchronization, and a central limit theorem hold. 
 \end{abstract}
 

\section{Introduction}
\renewcommand{\thethm}{\Alph{thm}}

The behaviour of random maps has been intensively studied recently (see \cite{Alseda-Misiurewicz, A_B_B, BBS, DKN, HKRVZ, Malicet, Navas}).
In the majority of works, however, it was assumed that the maps belong to some group of invertible transformations. The invertibility of maps $g_i$ seemed to be crucial since in proving ergodicity results this allows one to apply probabilistic tools, some martingale convergence theorems for instance, jointly with dynamical systems tools (ergodic theorems). To be precise, the martingale convergence theorem holds for right random walks $R_n=g_1\circ\cdots\circ g_n$ but the Birkhoff ergodic theorem is formulated for C\'esaro's means of left random walks $L_n=g_n\circ\cdots\circ g_1$.
Due to invertibility we may replace the left random walk with the right random walk taking inverse transformations and
using the trivial identity: $g_1\circ\cdots\circ g_n=(g_n^{-1}\circ\cdots\circ g_1^{-1})^{-1}$. Thus we can use probabilistic and dynamical systems techniques simultaneously (see for instance \cite{BBS, DKN}). This breaks down for non-invertible maps.

In this paper, the considered maps on the interval $[0, 1]$ do not have to be invertible. We will assume that they are continuous and piecewise monotone (with finitely many pieces) and are chosen randomly according to some distribution $\mu$. 
We denote the collection of these maps by $\Gamma$ and assume that $\supp\mu=\Gamma$.
We study $\mu$--invariant (also called $\mu$--stationary) Borel probability measures $\nu$ on $[0,1]$, i.e.,
$$\nu=\int_{\Gamma} \nu\circ g^{-1} \, d\mu(g).$$

Ergodicity is a key concept in the theory of dynamical systems and stochastic processes since it captures some nice statistical properties of studied systems (see \cite{L_M, O_V}). In the present paper we investigate the quantity of ergodic $\mu$--invariant Borel probability measures. 
In the work of S. Brofferio and the last two named authors \cite{BOS} we introduced the so-called \textit{$\mu$--injectivity} condition, which can be thought of as  injectivity ``on average'', namely
$$\int_{\Gamma} \#g^{-1}(\{x\})\, d\mu(g)\leq 1  \quad\text{for all } x\in[0,1],
$$ where $\# g^{-1}(\{x\})$ denotes the number of elements in the pre-image.
This condition enabled us in \cite{BOS} to adapt the techniques developed in the theory of partially hyperbolic diffeomorphism by  
A. Avila and M. Viana in \cite{A_V} and subsequently extended by D. Malicet \cite{Malicet} to random homeomorphisms on a circle and an interval. 
In \cite{BOS} we provided a sufficient condition for the existence of a unique ergodic measure within the class of $\mu$--injective systems. Furthermore, in a continuation of the present paper by the authors without Horbacz, it is provided another sufficient condition for unique ergodicity in \cite{HOS} using bounded variation. In the present paper, the technique we use to deduce unique ergodicity, is proximality in the sense of Furstenberg  (see Definition~\ref{def: prox}), which might be of independent interest.
 Proximal dynamical systems gained special attention due to the monumental works of Furstenberg \cite{F63, F63(2), F67} on the Poisson boundary and its applications to group theory. In general, a $\mu$--stationary system need not be proximal. How to relate general $\mu$--stationary systems to proximal $\mu$--invariant systems was studied by Furstenberg and Glasner \cite{F_G}. The main result of this part of our paper says that every $\mu$--injective system is proximal under some quite natural assumptions. 

\begin{thm}\label{thm: prox intro}
If $(\Gamma, \mu)$ is $\mu$--injective and satisfies \ref{A+}, then every $\mu$--invariant Borel probability measure on $[0,1]$ is $\mu$--proximal.
\end{thm}
The condition in the theorem is given as follows:
\begin{align*}\label{A} 
\textit{$\forall x\in (0, 1)$  $\exists g_0, g_1\in \Gamma$ with $g_0(0)=0, g_0(1)\in (0, 1)$\qquad\qquad\quad} \tag{$\mathfrak{A}$} \\
\textit{and $g_1(1)=1, g_1(0)\in (0, 1)$ such that  $g_0(x)\in (0,x)$ and $g_1(x)\in (x,1)$.
}
\end{align*}
\begin{align*}\label{A+} 
\textit{Condition \ref{A} and one of the following holds:
} \tag{$\mathfrak{A}+$} \\
 \mu\big(\{g\in\Gamma\, \colon \, \exists \varepsilon_{g}>0, \ \forall x\in (0,\varepsilon_g):\ g(x)<x\}\big)>0 \text{ or }\\
 \mu\big(\{g\in\Gamma\, \colon \,\exists \varepsilon_{g}>0,\ \forall x\in (1-\varepsilon_g,1):\ g(x)>x\}\big)>0.
 \end{align*}
The latter says that there is a non-trivial portion of maps that are \textit{below the diagonal in the neighbourhood of $0$} or \textit{above the diagonal in the neighbourhood of $1$}. There are plenty of examples of systems satisfying \ref{A+},  see for instance Example~\ref{ex: 3maps}.

Moreover, in $\mu$--injective systems, if every $\mu$--invariant probability measure is $\mu$--proximal, then there can be at most one such probability measure (Lemma~\ref{lem:all primal implies unique}). Thus we deduce the following generalization of \cite{BOS} concerning unique ergodicity.
\begin{cor}\label{cor: unique intro} If $(\Gamma, \mu)$ is $\mu$--injective and satisfies \ref{A+}, then there exists exactly one $\mu$--invariant Borel probability measure on $[0,1]$.
\end{cor}
A crucial ingredient in the proof of this statement is atomlessness of the measure. For invertible systems it is well known that an ergodic $\mu$--invariant Borel probability measure on $[0,1]$ is either finitely supported or atomless. Under the assumption of $\mu$--injectivity, we can generalize this to non-invertible, continuous, piecewise monotone maps. 
\begin{prop}\label{prop: inv set intro}
Assume that $(\Gamma,\mu)$ is $\mu$--injective. If  $\nu$ is a $\mu$--invariant Borel probability measure on $[0, 1]$, then the set of $\nu$--atoms of maximal $\nu$--value is a $\Gamma$--invariant set $\mu$--almost surely.
\end{prop}
Under the additional assumption of \ref{A} we deduce atomlessness.
\begin{thm}\label{thm: atomless intro}
If $(\Gamma,\mu)$ is $\mu$--injective and satisfies \ref{A}, then every $\mu$-invariant Borel probability measure on $[0,1]$ is atomless.
\end{thm}
(One might wonder why we do not need any ergodicity assumption in the statement above, but the truth is that the conditions already force the measure to be ergodic by uniqueness from Corollary~\ref{cor: unique intro}.)

Furthermore, the above enables us to deduce that the corresponding Markov operator is asymptotically stable (Definition~\ref{def: as stab}). 

\begin{thm}\label{thm: stab intro}
If $(\Gamma, \mu)$ is $\mu$--injective and satisfies \ref{A+}, then the corresponding Markov operator is asymptotically stable.
\end{thm}

In the last part of the paper, we are concerned with the quenched central limit theorem. Lately, quenched central limit theorems have been proved for various non-stationary Markov processes in \cite{Czudek_Szarek, SGHZ, Komorowski-book, Komorowski-Walczuk, Stenflo}. In this paper, we show this for $\mu$--injective random systems.
Our proof is based on the Gordin and Lif\v{s}ic theorem (see Section IV.8 in \cite{B_I}, see also \cite{D_L}) and some generalization of the \L uczy\'nska and Szarek recent results (see \cite{G_S}). As a by-product of this generalization we obtain that the system under consideration satisfies the synchronization condition. Combining this fact with the Gordin and Lif\v{s}ic theorem we obtain the quenched central limit theorem.

\begin{thm}\label{thm: CLT intro}
Let $(\Gamma,\mu)$ be $\mu$--injective and satisfy \ref{A+}. Let $(\xi_n)_{n\ge 1}$ be the corresponding stationary Markov chain with initial distribution $\nu$.
Then for any centered Lipschitz function $\varphi: [0, 1]\to\mathbb R$, i.e., $\int_{[0, 1]}\varphi\d\nu=0$, the random process $(\varphi(\xi_n))_{n\ge 1}$ satisfies the central limit theorem.
Moreover, the same is true for the process $(\varphi(\xi^x_n))_{n\ge 1}$, where $(\xi^x_n)_{n\ge 1}$ is the corresponding Markov chain starting from an arbitrary point $x\in [0, 1]$.
\end{thm}

The outline of the paper is as follows. Section~\ref{sec: prel} contains some notation and
definitions from the theory of random maps and Section~\ref{sec: mu-inj} introduces $\mu$--injective systems. In Section~\ref{Furs_entropy} we recall the Furstenberg entropy and some results on local contractivity obtained in our previous paper.
In Section~\ref{sec: atomless} we prove that every $\mu$--invariant measure for a $\mu$--injective system with \ref{A} has no atoms. In Section~\ref{sec: prox} we study proximality. Here we prove the main result: Theorem~\ref{thm: prox intro} (=Theorem~\ref{thm_21_05_24}) says that $\mu$--injective systems with \ref{A+} are proximal in the sense of Furstenberg. The theorem is preceded by some lemmas. Section~\ref{sec: stab} is devoted to asymptotic stability of the corresponding Markov operator. The main theorem of this part of the paper (Theorem~\ref{thm: stab intro}=Theorem~\ref{thm}) says that any $\mu$--injective system with \ref{A+} has an asymptotically stable corresponding Markov operator. Section~\ref{Synchronization} is devoted to the proof of synchronization which is used in proving  the quenched central limit theorem.
Finally, in Section~\ref{sec: clt} we prove the quenched central limit theorem for a Markov chain corresponding to $\mu$--injective systems with \ref{A+}.

\section{Acknowledgement}
The authors would like to express their gratitude to the anonymous reviewer for his careful and dedicated reading of the paper and for providing useful advice that improved the accessibility of the content. In particular, his/her bringing attention to paper \cite{G_S}, coauthored by one of us but completely forgotten,  allowed us to prove synchronization and make the central limit theorem section more concise and readable. 

\section{Preliminaries}\label{sec: prel}
\renewcommand{\thethm}{\thesection.\arabic{thm}}

By $\mathcal B([0,1])$ we denote the space of all Borel subsets of $[0, 1]$. By $\mathcal M$ and $\mathcal P$  we denote the space of all Borel measures and the space of all Borel probability measures on $[0, 1]$, respectively. By $C([0, 1])$ we denote the family of all bounded continuous functions equipped with the supremum norm $\|\cdot \|$. We shall write $\langle \nu, f\rangle$ for $\int_{[0, 1]} f\d\nu$.

In the space $\mathcal M$ we introduce the Wasserstein metric
$$
d_{W}(\nu_1, \nu_2)=\sup_{f\in F}  |\langle \nu_1, f\rangle-\langle \nu_2, f\rangle|,
$$
where $F\subset C([0, 1])$ is the set of all $f: [0, 1]\to\mathbb R$ such that
 $|f(x)-f(y)|\le |x-y|$ for all $x, y\in [0, 1]$. It is well known that the space $\mathcal P$ equipped with the Wasserstein metric is complete and the convergence
$$
\lim_{n\to\infty} d_W(\nu_n, \nu)=0\qquad\text{for $\nu_n, \nu\mathcal\,\in \mathcal P$}
$$
 is equivalent to the weak convergence of measures:
 $$
\lim_{n\to\infty}\langle \nu_n, f\rangle =\langle \nu, f\rangle\quad\text{for all $f\in C([0, 1])$.}
 $$

An operator $P : \mathcal{M}\rightarrow\mathcal{M}$ is called a {\it Markov operator} if it satisfies the following two conditions:
\begin{itemize}
\item $P(\lambda_1\nu_1+\lambda_2\nu_2)=\lambda_1P\nu_1+\lambda_2P\nu_2$ for $\lambda_1, \lambda_2\geq0, \nu_1, \nu_2\in\mathcal{M}$,
\item $P\nu\in\mathcal P$ for $\nu\in\mathcal{P}$.
\end{itemize}

A Markov operator $P$ is called a {\it Feller operator} if there is a linear operator $U : C([0, 1])\rightarrow C([0, 1])$ such that $U^*=P$, i.e.,
$$
\langle \nu, Uf\rangle=\langle P\nu, f\rangle\qquad \textrm{for $f\in C([0, 1]),\ \nu\in\mathcal{M}$}.
$$

A measure $\nu$ is called {\it $P$--invariant} for a Markov operator $P$ if $P\nu=\nu$. Since $[0, 1]$ is a compact metric space, every Feller operator $P$ has an invariant probability measure. For example, let $\nu_0\in\mathcal P$ and define $\hat\nu\in C([0, 1])^*$ by $\hat\nu(f)=\LIM(\frac{1}{n}\sum_{k=1}^n\langle P^k\nu_0, f\rangle)$, where $\LIM$ denotes a Banach limit. By the Riesz Representation Theorem $\hat\nu(f)=\langle \nu, f\rangle$, where $\nu\in\mathcal P$ is invariant.

\begin{definition}\label{def: as stab}
An operator $P$ is called {\it asymptotically stable} if there exists a $P$--invariant measure $\nu\in\mathcal{P}$ such that the sequence $(P^n\eta)_{n\ge 1}$ converges in the weak-$*$ topology to $\nu$ for any $\eta\in\mathcal{P}$, i.e.,
$$
\lim_{n\to\infty} \langle P^n\eta, f\rangle=\langle \nu, f\rangle
\qquad\text{for any $f\in C([0, 1])$.}
$$
\end{definition}
Necessarily, $\nu$ is then unique.

Let $\Gamma$ denote a family of continuous and piecewise monotone functions on $[0, 1]$ equipped with the topology induced by $\bigl( C([0,1]),\|\cdot\|\bigr)$. Let $\mu$ be a Borel probability measure on $\Gamma$. 
There is no loss of generality in assuming that $\supp\mu=\Gamma$. Otherwise, we may restrict our consideration to $\supp\mu$. The pair $(\Gamma, \mu)$ will be called a {\it stochastic dynamical system}.

The operator $P:\mathcal M\to\mathcal M$ defined by the formula
\begin{equation}\label{e1_31.08.24}
P\nu(A)=\int_{\Gamma} \nu(g^{-1}(A))\, \d\mu(g)\quad \text{for every $A\in\mathcal{B}([0,1])$ and $\nu\in\mathcal M$}
\end{equation}
is a Feller operator. In particular, we will be interested in whether this Markov operator is asymptotically stable in the above sense.

A Borel probability measure $\nu$ on $[0,1]$ is called \textit{$\mu$--invariant} (also known as \textit{$\mu$--stationary}) if $\nu$ is $P$--invariant, i.e., 
$$
\nu(A)=\int_{\Gamma} \nu(g^{-1}A)\, \d\mu(g) \qquad \text{ for every $A\in\mathcal{B}([0,1])$}.
$$

 Finally, let us recall the definition and some properties of the variation of a function.
Let $f:[0,1]\to\R$. By $\bigvee_0^1 f$ we denote the variation of $f$ over $[0,1]$, i.e.,
\[
\bigvee_0^1 f\ :=\ \sup \sum_{i=0}^n \bigl| f(x_i)-f(x_{i-1})\bigr|,
\]
where the supremum is taken over all partitions $0=x_0<x_1<\dots<x_n=1$. If $E\subset \R$, then
\[
|E|\ =\ \mathrm{diam}(E) \ :=\ \sup\big\{ |x-y|\colon x,y\in E\bigr\}.
\]
If $E_1, E_2$ are subsets of $[0,1]$, we write $E_1\leq E_2$ when $x_1\leq x_2$ for all $x_1\in E_1$ and $x_2\in E_2$. Analogously, we use the notation $E_1<E_2$.

The following observation will be central in estimations later on.
\begin{lem}\label{lem:BV-estimate diameter}
    Let $f:[0,1]\to\R$ and let $B_1,\dots, B_k\subset [0,1]$ be such that $B_1\leq B_2\leq \dots\leq B_k$. Then
    \[
        \sum_{i=1}^k\;\bigl| f(B_i)\bigr|\ \leq\ \bigvee_0^1 f.
    \]
\end{lem}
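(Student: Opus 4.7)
The plan is to realise each diameter $|f(B_i)|$ by two points inside $B_i$ and then concatenate these pairs, using the ordering $B_1\leq\cdots\leq B_k$, to form a single partition of $[0,1]$ whose associated variation sum dominates $\sum_i |f(B_i)|$. If $\bigvee_0^1 f=\infty$ the inequality is trivial, so I may assume the variation is finite.

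Fix $\varepsilon>0$ and, for each non-empty $B_i$, choose $a_i,b_i\in B_i$ with $a_i\leq b_i$ and
\[
|f(b_i)-f(a_i)|\ \geq\ |f(B_i)|-\varepsilon/k,
\]
which is possible by the definition $|f(B_i)|=\diam(f(B_i))=\sup\{|f(x)-f(y)|:x,y\in B_i\}$. The hypothesis $B_1\leq B_2\leq\cdots\leq B_k$ then forces
\[
a_1\leq b_1\leq a_2\leq b_2\leq\cdots\leq a_k\leq b_k.
\]
Prepending $0$ and appending $1$, and then deleting any repeated entries, produces a strict partition $0=x_0<x_1<\cdots<x_n=1$ of $[0,1]$ whose variation sum is bounded below by
\[
\sum_{i=1}^k |f(b_i)-f(a_i)|\ \geq\ \sum_{i=1}^k |f(B_i)|-\varepsilon,
\]
since inserting $0$ or $1$ adds only non-negative terms, and deleting a repeated point drops only a zero term from the sum. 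Hence $\bigvee_0^1 f\geq \sum_{i=1}^k |f(B_i)|-\varepsilon$, and letting $\varepsilon\to 0$ finishes the proof.

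The only (minor) obstacle is that two adjacent sets $B_i$ and $B_{i+1}$ may share an endpoint, forcing $b_i=a_{i+1}$; this is precisely why the assembled sequence is only weakly monotonic and must be thinned before invoking the definition of $\bigvee_0^1 f$, which requires strict inequalities. Empty $B_i$ contribute $0$ to the left-hand side and can be ignored.
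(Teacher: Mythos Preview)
Your proof is correct and follows essentially the same approach as the paper: realise each $|f(B_i)|$ by a pair of points in $B_i$, use the ordering $B_1\le\cdots\le B_k$ to line these pairs up into (part of) a partition of $[0,1]$, and compare with $\bigvee_0^1 f$. The only cosmetic differences are that the paper disposes of singletons up front and then takes a supremum over the choices of points rather than using an $\varepsilon$-approximation, and is less explicit than you are about merging coincident nodes to obtain a strict partition.
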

\begin{proof}
    If $B_i$ is a singleton, then $|f(B_i)|=0$. So, without loss of generality we may assume that each $B_i$ consists of at least two points. Pick $x_{i,1}, x_{i,2}$ in $B_i$, with $x_{i,1}<x_{i,2}$. Since $B_1\leq B_2\leq\dots \leq B_k$,
    \[
        \sum_{i=1}^k \bigl| f(x_{i,1})-f(x_{i,2})\bigr|\ \leq\ \bigvee_0^1 f.
    \]
    Taking the supremum over all possible choices for $x_{i,1}, x_{i,2}$, $i=1,\dots, k$, gives the result.
\end{proof}
\vskip 0.2cm

\section{Introduction to $\mu$--injective systems}\label{sec: mu-inj}
Let $(\Gamma, \mu)$ be a stochastic dynamical system. 
 We define $\mu$--injectivity. Heuristically, $\mu$--injectivity can be thought of as being injective ``on average'' with respect to the measure $\mu$. More precisely, let $\# g^{-1}(\{x\})$ denote the number of elements in $g^{-1}(x)$.
 \begin{definition}\label{def: mu-inj}
 We say that a stochastic dynamical system $(\Gamma,\mu)$ is \textit{$\mu$--injective}  if  
 $$ 
 \int_{\Gamma} \# g^{-1}(\{x\}) \, \d \mu(g) \leq  1 \text{ for all $x\in [0,1]$}.
 $$
\end{definition}

To illustrate $\mu$--injectivity, let us revisit an example of our paper \cite{BOS}, which contains a system  $\mu$--injective for certain choices of $\mu$, but not $\mu$--injective for some other choices of $\mu$.
 \begin{ex}\label{ex: 3maps} Let us consider the following three continuous functions.
     $$
g_1(x):=
\begin{cases}
3x&\text{ if } x\in[0,\frac{1}{3}],\\
-3x+2 &\text{ if } x\in(\frac{1}{3}, \frac{2}{3}],\\
3x-2 &\text{ if } x\in(\frac{2}{3}, 1],
\end{cases} \quad \ \ \ g_2(x):=\frac{x}{3}   \quad\text{ and } \quad g_3(x):=\frac{x}{3}+\frac{2}{3}.
$$

\begin{center}
\begin{tikzpicture}
\draw (0,0) rectangle (3,3);
\draw (0,0) -- (1,3) -- (2,0) -- (3,3);
\draw (0,0) node[anchor=north]{0};
\draw (3,0) node[anchor=north]{1};
\draw (0,3) node[anchor=east]{1};
\draw (1,-0.05)--(1,0.05) node[anchor=north]{$\frac{1}{3}$};
\draw (2,-0.05)--(2,0.05) node[anchor=north]{$\frac{2}{3}$};
\draw (0.75,1.8) node[anchor=north]{$g_1$};
\draw[blue] (0,0) -- (3,1);
\draw (-0.05,1)--(0.05,1) node[anchor=east]{$\frac{1}{3}$};
\draw (1.1,0.9) node[anchor=north]{\begin{color}{blue}$g_2$\end{color}};
\draw[red] (0,2) -- (3,3) ;
\draw (0,0) node[anchor=north]{0};
\draw (3,0) node[anchor=north]{1};
\draw (0,3) node[anchor=east]{1};
\draw (-0.05,2)--(0.05,2) node[anchor=east]{$\frac{2}{3}$};
\draw (1.8,2.6) node[anchor=north]{\begin{color}{red}$g_3$\end{color}};
\end{tikzpicture}
\end{center}
Let us take $\Gamma=\{g_1, g_2, g_3\}$ and $\mu(\{g_2\})=\mu(\{g_3\})=:p$. Since $\# g_1^{-1}(\{x\})=3$ { for $0<x<1$}, we see that the system is $\mu$--injective for the case that $p\geq \frac{2}{5}$ and not $\mu$--injective otherwise.\\
 Moreover, this system has additional features, namely it satisfies condition~\ref{A+}.
 \end{ex}

\section{The Furstenberg entropy and local contractivity}\label{Furs_entropy}

Let $(\Gamma, \mu)$ be a $\mu$--injective stochastic dynamical system. Since all functions $g$ in $\Gamma$ are continuous and piecewise monotone (with finitely many pieces), the set function $g^{-1}\nu(\cdot):=\nu (g(\cdot))$ is locally a measure and we can define the \textit{generalized Radon--Nikodym derivative} (for $\nu$-almost every $x$) by  
$$ 
\d_{\nu}g(x)=\frac{\d g^{-1}\nu}{\d\nu}(x):=\lim_{r\to 0}\frac{\nu(g(B_{r}(x)))}{\nu(B_{r}(x))} \quad \text{for $\nu$--a.e. } x.
$$
If the measure $\nu$ has no atoms, this limit is $\nu$--almost everywhere well--defined (see \cite{BOS} for details).

If $\ln^+ \d_\nu g(x)$ is $\nu\times\mu$-integrable, we can define the \textit{(generalized) Furstenberg entropy} as
$$ 
h_{\mu}(\nu):=-  \int_{[0,1]} \int_{\Gamma} \ln (\d_\nu g(x))\  \d\mu(g) \d\nu(x)\in (-\infty,+\infty].
$$ 
For any $x\in (0,1)$ we denote  
\[
\mathcal J^x:=\{I: \text{ $I\subset [0, 1]$ is a closed interval and}\,\,  x\in \mathrm{int}(I)\}
\]
and
\[
J(x,g):=\sup\left\{\frac{\nu(g (I))}{\nu(I)}\, : \, I\in\mathcal J^x \right\}\in[0,+\infty]
\]
with the convention $\frac{0}{0}=0$.  Observe that by definition $\d_\nu g(x)\leq J(x, g)$. Thus if $\ln^+J$ is $\nu\times\mu$--integrable then the entropy is well--defined. However, in Lemma 3.1 in \cite{BOS} we proved that if $\nu$ has no atoms and the system $(\Gamma, \mu)$ is $\mu$--injective, then $\ln^+J\in L^1(\Gamma\times[0,1],\mu\times\nu)$ (and $h_\mu(\nu)$ is well-defined). We also proved:

\begin{prop}[Proposition 3.3 in \cite{BOS}] \label{prop: h pos} Let $(\Gamma,\mu)$ be a stochastic dynamical system, and let $\nu$ be an atomless Borel probability measure on $[0,1]$. If  $(\Gamma,\mu)$ is $\mu$--injective, then either $h_{\mu}(\nu)>0$ (possibly infinite) or 
$\d_{\nu} g(x)\equiv 1$ for $\nu$--a.e. $x\in [0, 1]$ and  $\mu$--a.e. $g\in\Gamma$.
\end{prop}

To exclude the possibility that  $\d_{\nu} g(x)\equiv 1$ for $\nu$--a.e. $x\in [0, 1]$ and  $\mu$--a.e. $g\in\Gamma$ it is enough to assume that the stochastic dynamical system is contracting in the neighbourhood of some point $x_0\in \supp\nu$, i.e., there exists $\varepsilon>0$ such that the set of $g\in\Gamma$ satisfying:
$$
  g(x_0)=x_0
  $$
 and 
 $$
   |g(x)-g(x_0)|<|x-x_0|\quad\text{for all}\,\, x\in [x_0-\varepsilon, x_0+\varepsilon]\cap[0,1]\setminus\{x_0\}
$$
has positive $\mu$--measure. Then the condition 
$\d_{\nu} g(x)\equiv 1$ for $\nu$--a.e. $x\in [0, 1]$ and  $\mu$--a.e. $g\in\Gamma$ does not hold (see Theorem 3.6 in \cite{BOS}).

\begin{prop}[Proposition 2.3 in \cite{BOS}]\label{p_1_03.05.25}
Let $(\Gamma, \mu)$ be a stochastic dynamical system, and let $\nu$ be a $\mu$--invariant atomless ergodic Borel probability measure on the interval $[0,1]$ such that $\ln^+ J$ is $\nu\times\mu$--integrable and  
	$h_{\mu}(\nu)>0$.
	Then for every ${h}\in (0, h_{\mu}(\nu))$ and
	$\mu^{\otimes\mathbb{N}}$-a.e. $\omega=(g_1, g_2, \ldots)\in \Gamma^{\mathbb{N}}$ and $\nu$--a.e. $x\in [0,1]$ there exists a closed interval $I=I(\omega,x)$ with $x\in \mathrm{int}(I)$ such that
	$$
	|g_n\circ \ldots \circ g_1(I)|\leq \exp(-n\cdot {h})\quad \text{for all $n\ge 1$}.
	$$
\end{prop}

\section{Measures with no atoms}\label{sec: atomless}

Note that a stochastic dynamical system on $[0,1]$ has at least one $\mu$--invariant Borel probability measure, because $[0,1]$ is compact and the associated Markov operator $P$ is Feller, as discussed in Section~\ref{sec: prel}. 

\begin{prop}[Proposition~\ref{prop: inv set intro}]\label{prop: inv}
Assume that $(\Gamma,\mu)$ is $\mu$--injective. Let $\nu$ be a $\mu$--invariant probability measure on $[0, 1]$. Then the set of $\nu$--atoms of maximal value w.r.t. $\nu$ is $\Gamma$--invariant $\mu$--almost surely.
\end{prop}

\begin{proof}
If the set of atoms of $\nu$ is empty, then the above statement is trivial. So, assume the set of atoms of $\nu$ is non-empty. It is at most countable and there are finitely many atoms with maximal $\nu$-value. Set $\theta:= \max_{x\in [0, 1]}\nu(\{x\})$ and let $\{a_1, \ldots, a_N\}$ be the set of all $\nu$--atoms with maximal value $\theta$ under $\nu$.
Obviously, we have
$$
\nu(\{a_i\})=\int_{\Gamma} \nu(g^{-1}(a_i)) \, \d \mu(g)\qquad\text{for $i=1,\ldots, N$}.
$$
If for a $\mu$--positive set $E$ of $g$ we have $g^{-1}(a_i)\nsubseteq\{a_1, \ldots, a_N\}$,
then we would obtain 
\begin{align*}
\theta=\nu(\{a_i\})&=\int_{\Gamma} \nu(g^{-1}(a_i))\, \d \mu(g)\\
& = \theta\int_{\Gamma\setminus E}\# g^{-1}(a_i)\, \d \mu(g) + \int_E \nu(g^{-1}(a_i)) \, \d \mu(g)\\
& < \theta\int_{\Gamma\setminus E}\# g^{-1}(a_i)\, \d \mu(g) + \theta \int_E \# g^{-1}(a_i) \, \d \mu(g)\\
&=\theta \int_{\Gamma} { \# g^{-1}(a_i)} \, \d \mu(g) \leq  \theta,
\end{align*}
but this is impossible. 
Thus for $\mu$--a.e. $g$ we have $g^{-1}(a_i)\subset\{a_1, \ldots, a_N\}$, which proves the claim.
\end{proof}

Using the above we can proof the following:

\begin{thm}[Theorem~\ref{thm: atomless intro}]\label{thm: atomless}
If a stochastic dynamical system $(\Gamma,\mu)$ is $\mu$--injective and satisfies \ref{A}, then every $\mu$-invariant Borel probability measure on $[0,1]$ is atomless, and, in particular, it is concentrated on $(0, 1)$. 
\end{thm}

\begin{proof} Let $\nu$ be a $\mu$--invariant Borel probability measure on $[0,1]$.  Like in the proof of Proposition~\ref{prop: inv}, let
$\theta:= \max_{x\in [0, 1]}\nu(\{x\})$ and let  $\{a_1, \ldots, a_N\}$ be the set of all $\nu$-atoms with maximal $\nu$--value $\theta$ with 
$a_1<a_2<\cdots<a_N$ provided that $N>1$. 
Then, by Proposition~\ref{prop: inv} we know that $g^{-1}(a_i)\subset\{a_1, \ldots, a_N\}$ for  $\mu$--a.e. $g\in \Gamma$. Thus, we obtain that
$$
\begin{aligned}
N\theta&=\sum_{i=1}^N \nu(\{a_i\})=\int_{\Gamma}\sum_{i=1}^N \nu(g^{-1}(a_i)) \, \d \mu(g)\\
&=\int_{\Gamma}\theta \sum_{i=1}^N\sum_{j=1}^N {\bf 1}_{\{a_j\}}(g^{-1}(a_i))\, \d \mu(g)=
\int_{\Gamma}\theta \sum_{j=1}^N\sum_{i=1}^N {\bf 1}_{\{a_i\}}(g(a_j))\, \d \mu(g)\\
&=\theta\int_{\Gamma}\sum_{j=1}^N {\bf 1}_{\{a_1,\ldots, a_N\}}(g(a_j))\, \d \mu(g).
\end{aligned}
$$
This, in turn, implies that $\sum_{j=1}^N {\bf 1}_{\{a_1,\ldots, a_N\}}(g(a_j))=N$ for $\mu$--a.e. $g$. However, this is impossible since $\mu(\{g\in\Gamma: g(a_1)\in (0, a_1)\})>0$ if $a_1>0$ and $\mu(\{g\in\Gamma: g(a_2)\in (0, a_2)\})>0$ if $a_1=0$ and $N\ge 2$, due to condition~\ref{A} together with the assumption that $\supp\mu=\Gamma$. If $a_1=0$ and $N=1$, then
${\bf 1}_{\{a_1\}}(g(a_1))=0$ for some $g$, by (\ref{A}). This is also satisfied for all 
$h$ sufficiently close to $g$ which lie in 
$\supp(\mu)$.
  This completes the proof. 
 \end{proof}

The previous theorem implies also the existence of a $\mu$--invariant measure on the open interval $(0, 1)$.

\begin{remark}
Under the assumptions of Theorem~\ref{thm: atomless}
there exists a $\mu$--invariant probability measure on $(0, 1)$. This measure has no atoms.
\end{remark}

\section{Proximality}\label{sec: prox}

Let $\nu$ be a  $\mu$--invariant probability measure on $[0, 1]$. 
The function $h: \Gamma\to \mathbb R$ defined by the formula
$$
h(g)=\int_{[0, 1]} \varphi (g(x))\nu(\d x)\qquad\text{for $g\in\Gamma$}
$$
is a $\mu$--harmonic function on $\Gamma$ for any bounded Borel function $\varphi: [0, 1]\to\mathbb R$,  i.e., it satisfies the mean value property
$$
h(g)=\int_{\Gamma} h(g\circ g')\mu(\d g').
$$
It is well known that the sequence of measures $(\nu(g_n^{-1}\circ\cdots\circ g_1^{-1}(\cdot)))_{n\ge 1}$ is weakly convergent for $\mu^{\otimes\mathbb N}$--a.e. $\omega=(g_1, g_2,\ldots)\in\Gamma^{\mathbb N}$ (see \cite{F}, Corollary p. 20). Let us denote its limit by $\nu_{\omega}$. 
The measures $\nu$ will satisfy the barycenter 
equation
$$
\nu(\cdot)=\int_{\Gamma^{\mathbb N} }\nu_{\omega}(\cdot) \,\,\mu^{\otimes \mathbb N}(\d \omega).
$$

\begin{definition}[H. Furstenberg]\label{def: prox}
Following Furstenberg we say that a $\mu$--invariant measure  $\nu$ for a given stochastic dynamical system $(\Gamma,\mu)$ is $\mu$--{\it proximal} (or just \textit{proximal}) if there exists a measurable function $\Theta: \Gamma^{\mathbb N}\to [0, 1]$ such that
$$
\nu_{\omega}=\delta_{\Theta(\omega)}
$$
for $\mu^{\otimes \mathbb N}$--a.e.
$\omega=(g_1, g_2, \ldots)\in\Gamma^{\mathbb N}$.	
\end{definition}

A first observation concerns the relation between unique ergodicity and proximality.
\begin{lem}\label{lem:all primal implies unique}
    Let $(\Gamma,\mu)$ be a stochastic dynamical system such that every $\mu$--invariant measure is $\mu$--proximal. Then $(\Gamma,\mu)$ has at most one $\mu$--invariant measure.
\end{lem}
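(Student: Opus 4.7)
The plan is a convexity argument: assuming there are two distinct $\mu$-invariant measures, form their midpoint and derive a contradiction from the fact that Dirac masses are extreme points of $\mathcal{P}$. Specifically, suppose $\nu_1,\nu_2\in\mathcal{P}$ are $\mu$-invariant with $\nu_1\neq \nu_2$, and set $\nu:=\tfrac{1}{2}(\nu_1+\nu_2)$. Since $P$ is linear, the set of $P$-invariant probability measures is convex, so $\nu$ is also $\mu$-invariant. By hypothesis, each of $\nu_1,\nu_2,\nu$ is $\mu$-proximal, yielding measurable maps $\Theta_1,\Theta_2,\Theta:\Gamma^{\mathbb N}\to[0,1]$ such that $(\nu_i)_\omega=\delta_{\Theta_i(\omega)}$ and $\nu_\omega=\delta_{\Theta(\omega)}$ for $\mu^{\otimes\mathbb N}$-a.e. $\omega$.

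Next I would identify $\nu_\omega$ with the corresponding convex combination of $(\nu_1)_\omega$ and $(\nu_2)_\omega$. By definition, $\nu_\omega$ is the weak limit of $\nu(g_n^{-1}\circ\cdots\circ g_1^{-1}(\cdot))$, and linearity of the pullback gives
$$
\nu(g_n^{-1}\circ\cdots\circ g_1^{-1}(\cdot))\ =\ \tfrac{1}{2}\nu_1(g_n^{-1}\circ\cdots\circ g_1^{-1}(\cdot))+\tfrac{1}{2}\nu_2(g_n^{-1}\circ\cdots\circ g_1^{-1}(\cdot)).
$$
On the intersection of the three full-measure sets on which the individual weak limits exist, uniqueness of weak limits implies
$$
\delta_{\Theta(\omega)}\ =\ \tfrac{1}{2}\delta_{\Theta_1(\omega)}+\tfrac{1}{2}\delta_{\Theta_2(\omega)}.
$$
A Dirac mass is an extreme point of $\mathcal{P}$, hence cannot be written as a nontrivial convex combination of two distinct probability measures; this forces $\Theta_1(\omega)=\Theta_2(\omega)=\Theta(\omega)$ for $\mu^{\otimes\mathbb N}$-a.e. $\omega$.

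Finally I would invoke the barycenter equation applied to $\nu_1$ and $\nu_2$:
$$
\nu_1(\cdot)\ =\ \int_{\Gamma^{\mathbb N}}\delta_{\Theta_1(\omega)}(\cdot)\,\d\mu^{\otimes\mathbb N}(\omega)\ =\ \int_{\Gamma^{\mathbb N}}\delta_{\Theta_2(\omega)}(\cdot)\,\d\mu^{\otimes\mathbb N}(\omega)\ =\ \nu_2(\cdot),
$$
contradicting $\nu_1\neq \nu_2$. The argument is essentially bookkeeping, so I do not expect a serious obstacle; the only substantive point is confirming that the defining weak limit of $\nu_\omega$ is compatible with convex combinations, which is immediate from linearity of pullback together with the uniqueness of weak limits on a full-measure subset of $\Gamma^{\mathbb N}$.
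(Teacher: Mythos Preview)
Your proof is correct and follows essentially the same approach as the paper: form the midpoint $\nu=\tfrac12(\nu_1+\nu_2)$, use linearity to identify $\nu_\omega=\tfrac12(\nu_1)_\omega+\tfrac12(\nu_2)_\omega$, and exploit that a Dirac mass cannot be a nontrivial convex combination. The only cosmetic difference is the order of the two ingredients---the paper first invokes the barycenter equation to get $\Theta_1\neq\Theta_2$ on a set of positive measure and then derives the contradiction from the midpoint, whereas you run the midpoint argument first to force $\Theta_1=\Theta_2$ a.e.\ and then apply the barycenter equation to conclude $\nu_1=\nu_2$.
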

\begin{proof} Assume, contrary to our claim, that there exist two different $\mu$--invariant measures
$\nu^1, \nu^2\in\mathcal P$. Then we have $\nu^i_{\omega}=\delta_{\Theta_i(\omega)}$, $i=1, 2$,
for $\mu^{\otimes \mathbb N}$--a.e. $\omega$ and some measurable functions $\Theta_1, \Theta_2: \Gamma^{\mathbb N}\to [0, 1]$.
Since $\nu^i=\int_{\Gamma^{\mathbb N}} \nu^i_{\omega}\,\d\mu^{\otimes\mathbb N}$ for $i=1, 2$ and $\nu^1\neq\nu^2$, we obtain 
\begin{equation}\label{e1_21.05.24}
\mu^{\otimes\mathbb N}(\{\omega\in\Gamma^{\mathbb N}: \Theta_1({\omega})\neq\Theta_2({\omega})\})>0.
\end{equation}
On the other hand, since $\nu^3=(\nu^1+\nu^2)/2$ is $\mu$--invariant, it is $\mu$-proximal by assumption. Then $\nu^3_{\omega}=\delta_{\Theta_3(\omega)}$ 
for $\mu^{\otimes \mathbb N}$--a.e. $\omega$ and some measurable functions $\Theta_3: \Gamma^{\mathbb N}\to [0, 1]$.
The easy observation that
$$
\nu^3_{\omega}=\frac{\nu^1_{\omega}+\nu^2_{\omega}}{2}=\frac{\delta_{\Theta_1(\omega)}+\delta_{\Theta_2(\omega)}}{2}\neq\delta_{\Theta_3(\omega)}
$$
on some  $\mu^{\otimes\mathbb N}$--positive subset of $\omega\in\Gamma^{\otimes\mathbb N}$, by
(\ref{e1_21.05.24}), 
leads to a contradiction and completes the proof.
\end{proof}

\vskip2mm
Let us denote $\Gamma_0:=\{g\in\Gamma\colon g(0)=0\}$ and $
\Gamma_1:=\{g\in\Gamma\colon g(1)=1\}$.
The following lemma will be used repeatedly:

\begin{lem}\label{r1_02.10.25}  Let a stochastic dynamical system $(\Gamma,\mu)$ satisfy \ref{A}. Then for every $x\in [0, 1]$ and $\varepsilon>0$ there exists $(g_1,\ldots, g_n)\in\Gamma_0^n$ ({\it resp}. $(g_1,\ldots, g_n)\in\Gamma_1^n$)
for some $n\in\mathbb N$ such that $g_1\circ\cdots\circ g_n(x)\in [0, \varepsilon)$ ({\it resp}. $g_1\circ\cdots\circ g_n(x)\in (1-\varepsilon, 1]$). 
\end{lem}

\begin{proof} Let $x>0$ be given. Let $a_0$ be  the infimum of $a$ such that the point $x$ can be sent by a composition $g_1\circ\cdots\circ g_n$, $g_i\in\Gamma_0$; to a point in $[0, a)$. If $a_0>0$ we choose $g\in \Gamma_0$ such that $g(a_0)<a_0$, by \ref{A}, and taking the composition of $g$ with a suitable $g_1\circ\cdots\circ g_n$ at $x$ we obtain a contradiction with the definition of $a_0$. Hence for every $\varepsilon>0$ there exists $(g_1,\ldots, g_n)\in\Gamma_0^n$ 
for some $n\in\mathbb N$ such that $g_1\circ\cdots\circ g_n(x)\in [0, \varepsilon)$. 
The case for the interval $(1-\varepsilon, 1]$ may be proved analogously.
\end{proof}

The previous lemma may be slightly strengthened. Namely, we have:

\begin{lem}\label{l1_04.10.25}
Let a stochastic dynamical system $(\Gamma,\mu)$ satisfy \ref{A+}. Then for every $\varepsilon>0$ and $x, y\in [0, 1]$ there exist $m\ge 1$ and a sequence $(g_1, \ldots, g_m)\in\Gamma_0^m$  such that
\[
g_1\circ\cdots\circ g_m (w)\in [0, \varepsilon)\qquad\text{for $w\in\{x, y\}$.}
\]
\end{lem}

\begin{proof} Let $\varepsilon>0$. There is no loss of generality in assuming that $\varepsilon<\varepsilon_0$, where
 $\varepsilon_0$ is such that 
\[
\mu(\{g\in\Gamma_0: g(z)<z\,  \text{ for }\, z\in (0, \varepsilon_0)\})>0,
\] 
by condition~\ref{A+}. Fix $x, y\in [0, 1]$. By Lemma \ref{r1_02.10.25}, the compactness of $[0, 1]$ and continuity of the maps from $\Gamma$ we find a covering $U_1, \ldots, U_p$ of $[0, 1]$ and sequences of maps 
\[
(g_{1, 1}, \ldots, g_{1, m_1}), \ldots, (g_{p, 1}, \ldots, g_{p, m_p}), \quad g_{i, j}\in\Gamma_0
\] 
such that 
\[
g_{k, 1}\circ\cdots\circ g_{k, m_k}(U_k)\subset [0, \varepsilon)\qquad\text{for $k=1,\ldots, p$}.
\]
Let 
$i\in\{1,\ldots, p\}$ be such that that $x\in U_i$.
 Since $g_{k, 1}\circ\cdots\circ g_{k, m_k} (0)=0$ for every $k\in\{1, \ldots, p\}$, by continuity there exists $\varepsilon_1>0$ such that
\begin{equation}\label{e2_04.10.25}
g_{k, 1}\circ\cdots\circ g_{k, m_k} ([0, \varepsilon_1))\subset [0, \varepsilon)
\end{equation}
for every $k\in\{1, \ldots, p\}$. 
Now we may take $g\in \Gamma_0$ and $r\ge 1$ such that
$g^r([0, \varepsilon))\subset [0, \varepsilon_1)$. Hence
\[
g_{k, 1}\circ\cdots\circ g_{k, m_k}\circ g^r \circ g_{i, 1}\circ\cdots\circ g_{i, m_i} (x)\subset [0, \varepsilon)\]
for every $k\in\{1, \ldots, p\}$, by (\ref{e2_04.10.25}). 

 Since $g^r\circ g_{i, 1}\circ\cdots\circ g_{i, m_i} (y)\in U_j$ for some $j\in\{1, \ldots, p\}$, we obtain
\[
g_{j, 1}\circ\cdots\circ g_{j, m_j}\circ g^r\circ g_{i, 1}\circ\cdots\circ g_{i, m_i} (v)\in [0, \varepsilon).
\]
Obviously, we have also
\[
g_{j, 1}\circ\cdots\circ g_{j, m_j}\circ g^r\circ g_{i, 1}\circ\cdots\circ g_{i, m_i} (x)\in [0, \varepsilon)
\]
and our lemma is proved.
\end{proof}

We start the analysis of stochastic dynamical systems that satisfy condition~\ref{A+} with the lemma saying that some iterations will contract a large set, having measure almost $1$, to a set with a small diameter. In the following, we write the case when the second condition of \ref{A+} is fulfilled in 
brackets. 

\begin{lem}\label{l1_18.02.24}  If a stochastic dynamical system $(\Gamma,\mu)$ satisfies \ref{A+}, and has a $\mu$--invariant probability measure $\nu$, then for every $\varepsilon>0$ there exists a Borel set $B\subset [0, 1]$ with $\nu(B)>1-\varepsilon$ and a sequence $(g_1,\ldots, g_n)\in\Gamma_0^n$ (or $(g_1,\ldots, g_n)\in\Gamma_1^n$)
for some $n\in\mathbb N$ such that $g_1\circ\cdots\circ g_n(B)\subset [0, \varepsilon)$ (or $g_1\circ\cdots\circ g_n(B)\subset (1-\varepsilon, 1]$). In particular, $0, 1\in\supp\nu$.
\end{lem}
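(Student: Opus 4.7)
I focus on the ``below the diagonal on the left'' case; the other is entirely symmetric. By the below-the-diagonal-on-left assumption, one can pick $\hat g \in \Gamma_0$ and $\hat\epsilon \in (0,1)$ with $\hat g(y) < y$ for every $y \in (0,\hat\epsilon]$. Then the iterates $\hat g^n$ form a pointwise decreasing sequence of continuous functions converging to $0$ on $[0,\hat\epsilon]$, and by Dini's theorem the convergence is uniform; in particular, for every $\rho>0$ there is $N\in\N$ with $\hat g^N([0,\hat\epsilon])\subset[0,\rho)$.

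Let $S$ denote the semigroup of finite compositions of elements of $\Gamma_0$. A first claim is that $\inf\{h(x):h\in S\}=0$ for every $x\in[0,1]$: if this infimum were some $c>0$, condition $(\mathfrak A)$ would furnish $g\in\Gamma_0$ with $g(c)<c$, continuity of $g$ at $c$ would give an open neighbourhood $V\ni c$ on which $g(y)<c$, and by the definition of the infimum there would exist $h\in S$ with $h(x)\in V$, yielding $g\circ h\in S$ with $(g\circ h)(x)<c$, a contradiction. Coupling this with continuity, each $x\in[0,1]$ admits an open neighbourhood $U_x$ and $h_x\in S$ with $h_x(U_x)\subset[0,\varepsilon/2)$, and compactness of $[0,1]$ yields a finite subcover $U_{x_1},\dots,U_{x_k}$ with maps $h_1,\dots,h_k\in S$.

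The main obstacle is to merge $h_1,\dots,h_k$ into a single $H\in S$ with $\nu\bigl(H^{-1}([0,\varepsilon))\bigr)>1-\varepsilon$. My proposed construction is inductive: at each stage I compose the current composition on the left with some $h_{j(i)}\circ\hat g^{N_i}$, where $N_i$ is chosen large enough that $\hat g^{N_i}$ crushes $[0,\varepsilon/2)$ into a neighbourhood of $0$ so tight that $h_{j(i)}$ (continuous at $0$ with $h_{j(i)}(0)=0$) keeps the image inside $[0,\varepsilon/2)$. In this way mass already pushed close to $0$ is ``locked in'' by the stabilising layers $\hat g^{N_i}$, while each stage adds a fresh cover piece to the good set. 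Once $H$ is built, $B:=H^{-1}([0,\varepsilon))$ is the required Borel set, and $H=g_1\circ\cdots\circ g_n$ displays it as a composition in $\Gamma_0^n$. The hardest part will be bookkeeping the $\nu$-measure of the ``exceptional set'' of points whose intermediate image at some stage escapes the basin $[0,\hat\epsilon]$ of $\hat g$, where $\hat g^{N_i}$ provides no control; the order in which the cover pieces are processed and the parameters $N_i$ must be chosen so that the cumulative exceptional mass stays below $\varepsilon$.
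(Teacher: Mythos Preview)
Your proposal assembles the right ingredients --- the below-diagonal crusher $\hat g$, Dini's theorem for uniform convergence on $[0,\hat\epsilon]$, the semigroup $S$ reaching $0$ from every point, and the finite cover $U_{x_1},\dots,U_{x_k}$ with associated $h_1,\dots,h_k\in S$ --- but the inductive mechanism you propose does not work as stated, and the gap you flag as ``the hardest part'' is exactly where the argument breaks down.

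The problem is this: you compose on the \emph{left}, $H_i = h_{j(i)}\circ\hat g^{N_i}\circ H_{i-1}$, which is indeed necessary to ``lock in'' the good set $G_{i-1}=H_{i-1}^{-1}([0,\varepsilon/2))$ via continuity at $0$. But then the cover piece $U_{x_{j(i)}}$ is applied to the \emph{image} $\hat g^{N_i}\circ H_{i-1}([0,1])$, not to the domain. So what gets added to the good set at stage $i$ is not $U_{x_{j(i)}}$ itself but rather $(\hat g^{N_i}\circ H_{i-1})^{-1}(U_{x_{j(i)}})$, whose $\nu$-measure you have no a priori control over. Processing all $k$ cover pieces in some fixed order therefore does not guarantee that the good set ever exceeds $1-\varepsilon$ in measure: the bad set's image moves around unpredictably under each $\hat g^{N_i}\circ H_{i-1}$, and nothing forces it to be captured. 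Your proposal to ``choose the order and the $N_i$'' does not supply a mechanism for this.

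The paper's proof sidesteps the iteration entirely with a supremum argument. Set
\[
\gamma := \sup\bigl\{\nu(B): B\in\mathcal B([0,1]),\ \exists\, g_1,\dots,g_n\in\Gamma_0,\ g_1\circ\cdots\circ g_n(B)\subset[0,\varepsilon_0)\bigr\}
\]
and derive a contradiction from $\gamma<1$. The supremum structure does two jobs at once: first, after pushing a near-optimal $B$ into $[0,\eta)$, the \emph{complement} $B'$ must have at least $\gamma/2$ of its mass landing in $[\varepsilon_0,1]$ (else you could enlarge $B$ past $\gamma$); second, a single pigeonhole over the finite cover of $[\varepsilon_0,1]$ then locates one $U_{x_i}$ carrying at least $\gamma/(2p)$ of that escaped mass, and the corresponding $h_1\circ\cdots\circ h_m$ (chosen so that both $U_{x_i}$ and $[0,\eta)$ map into $[0,\varepsilon_0)$) produces a set of mass strictly exceeding $\gamma$. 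Only \emph{one} additional chunk is needed, not an iterative build-up, and the lower bound on that chunk comes for free from near-optimality of $B$. If you want to salvage your inductive approach, you would need to insert a pigeonhole step at each stage --- choose $j(i)$ to maximize the $\nu$-mass of $(\hat g^{N_i}\circ H_{i-1})^{-1}(U_{x_{j(i)}})$ among the bad set --- which gives geometric decay $\nu(B_i)\le(1-1/k)\nu(B_{i-1})$; but this is essentially the paper's idea unrolled, and the supremum version is cleaner.
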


\begin{proof} The idea of the proof is to use condition~\ref{A+} in order to find two sequences of functions, one which ensures that the part of $B$ which is already in $[0,\varepsilon)$ stays in $[0,\varepsilon)$, even after applying the second sequence, which is chosen so that the part in $[\varepsilon,1]$ moves towards the interval $[0,\varepsilon)$. This process can be iterated to get the result.

To be more precise, say that the stochastic dynamical system $(\Gamma,\mu)$ satisfies the first condition in \ref{A+}, i.e., it is ``below the diagonal close to 0'', and  let $\varepsilon_0>0$ be such that
\begin{equation}\label{e1_9.02.24}
\mu(\{g\in\Gamma\, : \forall x\in (0,\varepsilon_0)\,(g(x)<x)\})>0.
\end{equation}
 Let
$$
\gamma:=\sup\{\nu(B): B\in\mathcal B([0,1]) \,\wedge \, \exists 
 \,g_1,\ldots, g_n\in\Gamma_0\,\,
 (g_1\circ\cdots\circ g_n(B)\subset [0, \varepsilon_0))\}.
$$
If we prove that $\gamma=1$, the assertion will follow. Indeed, then for every $\varepsilon>0$ we may choose $B\in\mathcal B([0,1])$ with $\nu(B)>1-\varepsilon$ and $g_1,\ldots, g_n\in\Gamma_0$ such that $g_1\circ\cdots\circ g_n(B)\subset [0, \varepsilon_0)$. By (\ref{e1_9.02.24}), in turn, we find $g_0\in\Gamma_0$ and $m\in\mathbb N$ such that
$g_0^m([0, \varepsilon_0))\subset [0, \varepsilon)$. Consequently, we obtain $g_0^m\circ g_1\circ\cdots\circ g_n(B)\subset [0, \varepsilon)$ as desired.

Suppose, contrary to the claim, that $\gamma<1$.  For every $x\in [0, 1]$ we may choose $h_1,\ldots, h_n\in\Gamma_0$ such that
$h_1\circ\cdots\circ h_n(x)\in [0, \varepsilon_0)$, by Lemma \ref{r1_02.10.25}. Now by continuity we may find an open neighbourhood $U_x$ of $x$ such that $h_1\circ\cdots\circ h_n(w)\in [0, \varepsilon_0)$ for $w\in U_x$, and let $\eta_x<\varepsilon_0$ be an arbitrary positive constant such that $h_1\circ\cdots\circ h_n([0, \eta_x])\subset [0, \varepsilon_0]$. Having defined the open sets $U_x$ for $x\in [0, 1]$ we find a finite covering 
\begin{equation}\label{e1_12.02.24}
 U_{x_1}\cup\cdots\cup U_{x_p} \supset [\varepsilon_0, 1].
\end{equation}
Set $\eta:=\min_{1\le i\le p} \eta_{x_i}$. Choose $B\in \mathcal B([0,1])$ with $\nu(B)>\gamma -\tfrac{1-\gamma}{2p}$ and
$g_1,\ldots, g_n\in\Gamma_0$ such that $g_1\circ\cdots\circ g_n(B)\subset [0, \varepsilon_0)$, and let $g_0\in\Gamma_0$ be such that $g_0(x)<x$ for all $x\in [0, \varepsilon_0)$, by (\ref{e1_9.02.24}).
Now take $n_0\in\mathbb N$ such that $g_0^{n_0}([0, \varepsilon_0))\subset [0, \eta)$. By the definition of $\gamma$ we must have
\begin{equation}\label{e2_12.02.24}
\nu(\{x\in B': g_0^{n_0}\circ g_1\circ\cdots\circ g_n(x)\in 
[\varepsilon_0, 1]\})>\tfrac{1-\gamma}{2},
\end{equation}
where $B'=[0, 1]\setminus B$.  If not 
\[
\nu(\{x\in B': g_0^{n_0}\circ g_1\circ\cdots\circ g_n(x)\in 
[0, \varepsilon_0)\})\ge\tfrac{1-\gamma}{2}
\]
since $\nu (B')\ge1-\gamma$. Then for
\[
B_0:=B\cup \{x\in B': g_0^{n_0}\circ g_1\circ\cdots\circ g_n(x)\in 
[0, \varepsilon_0)\}
\]
we have
$$
\nu (B_0)>\gamma\quad\text{and}\quad
g_0^{n_0}\circ g_1\circ\cdots\circ g_n(B_0)\subset [0, \varepsilon_0),
$$
and this is contrary to the definition of $\gamma$. Further, by (\ref{e1_12.02.24}) and  (\ref{e2_12.02.24}) we obtain
$$
\begin{aligned}
&\sum_{i=1}^p \nu(\{x\in B': g_0^{n_0}\circ g_1\circ\cdots\circ g_n(x)\in U_{x_i}\})\\
&\ge \nu(\{x\in B': g_0^{n_0}\circ g_1\circ\cdots\circ g_n(x)\in \bigcup_{i=1}^p U_{x_i}\})\\
&\ge \nu(\{x\in B': g_0^{n_0}\circ g_1\circ\cdots\circ g_n(x)\in [\varepsilon_0, 1]\})\ge\tfrac{1-\gamma}{2}.
\end{aligned}
$$
Hence there exists $i\in\{1,\ldots, p\}$ such that
$$
\nu(\{x\in B': g_0^{n_0}\circ g_1\circ\cdots\circ g_n(x)\in U_{x_i}\})\ge\tfrac{1-\gamma}{2p}.
$$
Let
$$
\tilde B:=\{x\in B': g_0^{n_0}\circ g_1\circ\cdots\circ g_n(x)\in U_{x_i}\}.
$$
Choose $h_1,\ldots, h_{m}\in\Gamma_0$ such that
$h_1\circ\cdots\circ h_{m}(U_{x_i})\subset [0, \varepsilon_0)$ and observe that
$h_1\circ\cdots\circ h_{m}([0, \eta))\subset [0, \varepsilon_0)$. Therefore, 
$$
h_1\circ\cdots\circ h_{m}\circ g_0^{n_0}\circ g_1\circ\cdots\circ g_n(B\cup \tilde B)\subset [0, \varepsilon_0).
$$
Moreover, $\nu (B\cup \tilde B)>\gamma-\tfrac{1-\gamma}{2p}+\tfrac{1-\gamma}{2p}=\gamma$, which is impossible, by the definition of $\gamma$.  Thus $\gamma=1$ and the proof for the system $(\Gamma, \mu)$ which is below the diagonal close to 0 is completed. 

Finally, from what we have been proved and the continuity of $g\in \Gamma$ it follows that for every $\varepsilon>0$ there exists 
$n\in\mathbb N$, a compact set $K$ with $\nu(K)>0$ and an open set $U\subset \Gamma^n$ such that for every $(g_1,\ldots g_n)\in U$ we have $g_1\circ\cdots\circ g_n (K)\subset [0, \varepsilon)$. Hence
\[
\begin{aligned}
\nu([0, \varepsilon))=P^n\nu([0, \varepsilon))
&\ge \int_U {\bf 1}_{[0, \varepsilon)}(g_1\circ\cdots\circ g_n(x))\d\mu^{\otimes n}(g_1, \ldots, g_n)\nu(\d x)\\
&\ge \mu^{\otimes n}(U)\nu(K)>0,
\end{aligned}
\]
the last inequality by the fact that $\supp\mu=\Gamma$ and, consequently, $\mu^{\otimes n}(U)>0$.
Since $\varepsilon>0$ was arbitrary, we obtain $0\in\supp\nu$. The case when $(\Gamma, \mu)$ is above the diagonal close to 1 is proved analogously.
\end{proof}

\begin{rem}\label{r1_20.04.24} In the assertion of Lemma \ref{l1_18.02.24} we may require that the set $B$ is closed. In fact, for every $\varepsilon>0$ the closure of any set B found for $\varepsilon/2$ will meet the requirement.
\end{rem}

In the next lemma, closely related to Lemma \ref{l1_18.02.24}, 
 we prove that we may find a large set (with measure close to $1$) and as many different iterations as we wish such that they map the large set on disjoint, and therefore, small subsets. Similar ideas were used by Kudryashov in the study of bony attractors (see \cite{Ku}).

\begin{lem}\label{lem_07.03.24} Under the assumptions of Lemma \ref{l1_18.02.24} for every $\varepsilon >0$ and $n\in\mathbb N$ there exist $(g_{1, 1},\ldots, g_{1, i_1})\in \Gamma^{i_1},\ldots, (g_{n, 1},\ldots, g_{n, i_n})\in \Gamma^{i_n}$
and a closed set $B\subset [0, 1]$ with $\nu(B)>1-\varepsilon$ such that 
$$
g_{n, 1}\circ\cdots\circ g_{n, i_n}(B) <\, \cdots \, < g_{2, 1}\circ\cdots\circ g_{2, i_2}(B) < g_{1, 1}\circ\cdots\circ g_{1, i_1}(B).
$$ 
\end{lem}

\begin{proof} {The idea of the proof is obvious. By Lemma \ref{l1_18.02.24} almost everything of $B$ may be brought to a sufficiently small interval $[0, \varepsilon)$. Since the measure is atomless, it may be assumed that it is supported on some closed interval 
$[\varepsilon_0, \varepsilon]$. Now one can apply a lot of contraction  towards $0$, ensuring that the measure supported on  $[\varepsilon_0, \varepsilon)$ is brought to $[0, \varepsilon_0)$. Iterating this procedure would conclude the proof. }\\
Formally, fix an $\varepsilon>0$ and assume that the stochastic dynamical system $(\Gamma,\mu)$ is below the diagonal close to 0. The case when it is above the diagonal close to 1 may be done analogously.  We thus assume that 
\[
\mu(\{g\in\Gamma\, : \,g(x)<x,\forall x\in (0,\varepsilon)\})>0.
\]
 By Lemma \ref{l1_18.02.24} (see also Remark \ref{r1_20.04.24}) there exists a closed set $B\subset [0, 1]$ with $\nu(B)>1-\varepsilon$ and a sequence $(g_1,\ldots, g_n)\in\Gamma_0^n$, $n\in\mathbb N$, such that $g_1\circ\cdots\circ g_n(B)\subset [0, \varepsilon)$. By the continuity of the functions in $\Gamma$ there exists an open neighbourhood $U\subset \Gamma^n$ of $(g_1,\ldots, g_n)$ such that $h_1\circ\cdots\circ h_n(B)\subset [0, \varepsilon)$ for every $(h_1,\ldots, h_n)\in U$.
Obviously, $\mu^{\otimes n} (U)>0$. Since $\nu(\{0\})=P^n\nu(\{0\})=0$, by Theorem~\ref{thm: atomless}, there exists $({\tilde g}_1,\ldots, {\tilde g}_n)\in U$ such that $\nu ( {\tilde g_n}^{-1}\circ\cdots\circ {\tilde g_1}^{-1}(0)\cap B)=0$. Therefore, for sufficiently small $\eta>0$ and $B'=B\setminus {\tilde g_n}^{-1}\circ\cdots\circ {\tilde g_1}^{-1}([0, \eta))$ we have $\nu(B')>1-\varepsilon$. Taking $h\in\Gamma$ such that $0<h(x)<x$ for $x\in (0, \varepsilon)$ we will find an increasing sequence $k_1, \ldots, k_n\in\mathbb N$ such that the sets
$$
h^{k_i}\circ {\tilde g}_1\circ\cdots\circ {\tilde g}_n(B')
$$
for $i=1,\ldots, n$ are pairwise disjoint. This completes the proof.
\end{proof}

In our further consideration the crucial role will be played by 
the variation of compositions of the functions from $\Gamma$. 

\begin{lem}\label{lem: bnd var}
Let $\Gamma$ be a collection of piecewise monotone continuous functions on $[0,1]$, and let $\mu$ be a Borel probability measure on $\Gamma$ with $\supp\mu=\Gamma$.
If $\nu$ is a $\mu$--invariant measure and $(\Gamma, \mu)$ is $\mu$--injective, 
 then for $\mu^{\otimes \mathbb N}$--almost every $\omega=(g_1,\ldots, g_n,\ldots)$ we have
 $$
 \sup_{n\in\mathbb N} \bigvee_0^1 g_1\circ\cdots\circ g_n<+\infty.
 $$
\end{lem}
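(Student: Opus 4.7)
The plan is to show that $V_n := \bigvee_0^1 G_n$, where $G_n := g_1 \circ \cdots \circ g_n$, is a non-negative supermartingale with respect to the filtration $\mathcal F_n := \sigma(g_1, \ldots, g_n)$ on the probability space $(\Gamma^{\mathbb N}, \mu^{\otimes \mathbb N})$, and then to invoke Doob's supermartingale convergence theorem to conclude that $V_n$ converges $\mu^{\otimes \mathbb N}$--almost surely to an $L^1$ limit; in particular $\sup_n V_n < \infty$ almost surely.

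The key intermediate fact is a Banach-indicatrix-style identity. Since each $g_i$ is continuous and piecewise monotone, so is $G_n$. Writing $I_1 < I_2 < \cdots < I_{k_n}$ for its maximal intervals of monotonicity and using additivity of the variation along this partition (together with Lemma \ref{lem:BV-estimate diameter} to rule out contributions from any further refinement), one obtains
\[
V_n \ =\ \sum_{j=1}^{k_n} \bigl|G_n(I_j)\bigr|\ =\ \int_0^1 \#G_n^{-1}(y)\, \d y,
\]
the second equality being Fubini applied to $\sum_j \mathbf{1}_{G_n(I_j)}(y)$, which coincides with $\#G_n^{-1}(y)$ off the finite (hence Lebesgue-null) set of critical values of $G_n$. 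From $G_{n+1} = G_n \circ g_{n+1}$ one then reads $\#G_{n+1}^{-1}(y) = \sum_{z \in G_n^{-1}(y)} n(g_{n+1}, z)$.

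Since $g_{n+1}$ is independent of $\mathcal F_n$ and has law $\mu$, $\mu$--injectivity gives
\[
\E\bigl[\#G_{n+1}^{-1}(y) \,\big|\, \mathcal F_n\bigr]\ =\ \sum_{z \in G_n^{-1}(y)} \int_\Gamma n(g, z)\, \d\mu(g)\ \leq\ \#G_n^{-1}(y),
\]
and integrating in $y$ via Fubini yields $\E[V_{n+1} \mid \mathcal F_n] \leq V_n$. Specialising the same computation at $n=0$, with $G_0 = \id$, shows $\E V_1 \leq 1$, hence $\E V_n \leq 1$ for every $n$, so $(V_n, \mathcal F_n)$ is an $L^1$--bounded non-negative supermartingale and Doob's theorem completes the proof. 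The only genuinely subtle ingredient is the indicatrix identity $V_n = \int_0^1 \#G_n^{-1}(y)\, \d y$; once it is in place, $\mu$--injectivity is exactly the input needed to turn the identity $\#G_{n+1}^{-1}(y) = \sum_{z \in G_n^{-1}(y)} n(g_{n+1}, z)$ into the required supermartingale inequality, and the argument closes automatically.
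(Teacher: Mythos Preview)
Your argument is correct and follows essentially the same route as the paper's proof: both show that $V_n=\int_0^1 \#G_n^{-1}(y)\,\d y$ is a non-negative supermartingale via the decomposition $\#G_{n+1}^{-1}(y)=\sum_{z\in G_n^{-1}(y)} n(g_{n+1},z)$ together with $\mu$--injectivity, and then invoke the martingale convergence theorem. The only cosmetic difference is that the paper cites the Banach--Vitali theorem for the indicatrix identity $\bigvee_0^1 G_n=\int_0^1 \#G_n^{-1}(y)\,\d y$, whereas you sketch it directly for piecewise monotone continuous maps (your appeal to Lemma~\ref{lem:BV-estimate diameter} there is not quite the right direction, but the identity is standard and your sketch via the maximal monotone pieces is fine).
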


\begin{proof} {The proof is based on the observation that $\mu$--injectivity implies that the variation of random compositions forms a supermartingale. Then 
one could apply the Martingale Convergence Theorem to complete the proof.}
\\ Indeed, let ${\bf g_1}, {\bf g_2},\ldots$ denote the independent random variables defined on $(\Gamma^{\mathbb N}, \mu^{\otimes\mathbb N})$ with values in $\Gamma$ and distribution $\mu$, i.e., 
${\bf g_i}(\omega)=g_i$ for $\omega=(g_1, g_2,\ldots)$.
 We set
$$
\mathcal N_n (\cdot , x):=\#\{z\in [0, 1]:  {\bf g_1}(\cdot) \circ\cdots\circ {\bf g_n}(\cdot) (z)=x\}, 
$$
and let
$$
V_n(\omega)=\int_{[0, 1]}\mathcal N_n (\omega , x) \d x \quad\text{for $n\in\mathbb N$ and $\omega=(g_1, g_2,\ldots)$}.
$$

Now we are going to show that $(V_n)_{n\ge 1}$ is a positive supermartingale with respect to the filtration $\mathcal F_n=\sigma({\bf g_1},\ldots, {\bf g_n})$, $n\in\mathbb N$.
Namely, by $\mu$--injectivity and the Fubini theorem for every $\omega\in \Gamma^{\mathbb N}$ we have
$$
\begin{aligned}
\mathbb E [V_{n+1} \mid\mathcal F_n ](\omega)&=\int_{[0, 1]}\int_{\Gamma} \sum_{z\in g_n^{-1}\circ\cdots\circ g_1^{-1}(x)} \#g_{n+1}^{-1}(\{ z\})\mu (\d g_{n+1})\d x\\
&=\int_{[0, 1]}\sum_{z\in g_n^{-1}\circ\cdots\circ  g_1^{-1}(x)}\int_{\Gamma} \# g_{n+1}^{-1}(\{ z\})\mu (\d g_{n+1}) \d x\\
&\le \int_{[0, 1]} \#\{z: z\in g_n^{-1}\circ\cdots\circ g_1^{-1}(x)\} \d x\\
&=
\int_{[0, 1]} \mathcal N_n (\omega , x)\d x=V_n(\omega).
\end{aligned}
$$
By the Martingale Convergence Theorem the random variables 
$V_n (\cdot)$ are $\mu^{\otimes\mathbb N}$--a.s. convergent to a  finite random variable. Hence for $\mu^{\otimes\mathbb N}$--almost every $\omega\in\Gamma^{\mathbb N}$ we have $\sup_{n\ge 1} V_n (\omega)<+\infty$.

On the other hand, by the Banach--Vitali theorem (see \cite{L})
$$
V_n(\omega)=\int_{[0, 1]}\mathcal N_n (\omega , x) \d x=\bigvee_0^1 g_1\circ\cdots\circ g_n
$$
for $\omega=(g_1, g_2,\ldots) \in\Gamma^{\mathbb N}$ and $n\in\mathbb N$,
and consequently $\sup_{n\ge 1} \bigvee_0^1 g_1\circ\cdots\circ g_n<+\infty$ for 
$\mu^{\otimes\mathbb N}$--almost every $\omega=(g_1, g_2,\ldots)\in\Gamma^{\mathbb N}$. This completes the proof.
\end{proof}

In the following theorem we prove that any $\mu$-invariant Borel probability measure of the considered system is  $\mu$--proximal.

\begin{thm}[Theorem~\ref{thm: prox intro}]\label{thm_21_05_24}
Let $\Gamma$ satisfy \ref{A}, and let a Borel probability measure $\mu$ with $\supp\mu=\Gamma$ be given. Assume that the stochastic dynamical system $(\Gamma,\mu)$ is $\mu$--injective and below the diagonal close to 0 (or above close to 1). If $\nu$ is a $\mu$--invariant Borel probability measure, then $\nu$ is $\mu$--proximal.
 \end{thm}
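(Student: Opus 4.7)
The plan is to prove, for $\mu^{\otimes\mathbb N}$-a.e.\ $\omega$, that $\nu_\omega$ is a Dirac mass; then $\Theta(\omega):=\min\supp\nu_\omega$ is measurable and gives $\nu_\omega=\delta_{\Theta(\omega)}$. I treat only the below-the-diagonal-on-the-left case, the other being analogous. The argument combines the three preceding lemmas of this section with the cocycle identity $\nu_\omega=(g_1)_*\nu_{\sigma\omega}$ and ergodicity of the Bernoulli shift $\sigma$ on $(\Gamma^{\mathbb N},\mu^{\otimes\mathbb N})$.

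First I will extract a positive-mass atom of $\nu_\omega$. Fix $\varepsilon>0$ and $N\in\mathbb N$. By Lemma~\ref{lem_07.03.24} choose a closed $B\subset[0,1]$ with $\nu(B)>1-\varepsilon$ and compositions $T_1,\dots,T_N$ of elements of $\Gamma$ with $T_1(B)<\dots<T_N(B)$; by the atomlessness of $\nu$ (Lemma~\ref{lem1_28.09.23}) $B$ may be taken to be a closed interval, so each $T_k(B)$ is also a (closed) interval. Applying Lemma~\ref{lem:BV-estimate diameter} to $R_n:=g_1\circ\dots\circ g_n$ yields
\[
\sum_{k=1}^N \bigl|R_n(T_k(B))\bigr|\,\leq\,\bigvee_0^1 R_n\,\leq\, V(\omega),
\]
where $V(\omega):=\sup_n\bigvee_0^1 R_n<\infty$ $\mu^{\otimes\mathbb N}$-a.s.\ by Lemma~\ref{lem: bnd var}. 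Pigeonhole produces, for every $n$, an index $k_n(\omega)$ with $|R_n(T_{k_n}(B))|\leq V(\omega)/N$. Passing to a subsequence along which $k_{n_j}=k^*$ is constant and $R_{n_j}(T_{k^*}(B))$ Hausdorff-converges to an interval $I^*$ of length at most $V(\omega)/N$, the inclusion $T_k(B)\subset R_n^{-1}(R_n(T_k(B)))$ combined with the weak convergence $(R_{n_j})_*\nu\to\nu_\omega$ and the Portmanteau theorem gives $\nu_\omega(I^*)\geq \nu(T_{k^*}(B))$. The $P$-invariance of $\nu$, applied to a small $\mu^{\otimes i_{k^*}}$-positive neighborhood of the generating tuple of $T_{k^*}$ in $\Gamma^{i_{k^*}}$ and exploiting continuity of composition, supplies a uniform lower bound $\nu(T_{k^*}(B))\geq p_{k^*}>0$. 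Letting $N\to\infty$ with a diagonal choice of compositions nests intervals of arbitrarily small length, yielding a point $x(\omega)\in[0,1]$ with $\nu_\omega(\{x(\omega)\})\geq\theta(\omega)>0$.

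The second step, where I expect the main difficulty, is to upgrade ``$\nu_\omega$ has an atom'' to ``$\nu_\omega$ is a single Dirac.'' Let $N(\omega)\in\mathbb N\cup\{\infty\}$ denote the number of atoms of $\nu_\omega$. The cocycle $\nu_\omega=(g_1)_*\nu_{\sigma\omega}$ implies that the atoms of $\nu_\omega$ are $g_1$-images of atoms of $\nu_{\sigma\omega}$, so $N(\omega)\leq N(\sigma\omega)$; combined with shift-invariance of $\mu^{\otimes\mathbb N}$ this forces $N=N\circ\sigma$ a.s., and ergodicity of the Bernoulli shift makes $N$ a.s.\ equal to a constant $N_0\geq 1$. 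A similar argument shows the multiset of atom masses is a.s.\ constant. To rule out $N_0\geq 2$, I would iterate the concentration argument of step one on the residual measure $\nu_\omega-\theta(\omega)\delta_{x(\omega)}$ to show that $\nu_\omega$ is purely atomic, and then exploit $\mu$-injectivity $\int n(g,x)\,\d\mu(g)\leq 1$ together with the diffuseness of atom locations (forced by $\nu=\int\nu_\omega\,\d\mu^{\otimes\mathbb N}$ being atomless, Lemma~\ref{lem1_28.09.23}) to conclude that $N_0\geq 2$ atoms of fixed positive masses, transported injectively by $\mu$-a.e.\ $g_1$, would force $\nu$ to charge a positive-measure set of atom pairs, a contradiction. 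This final bookkeeping, propagating the atomic structure of $\nu_\omega$ along the shift under the $\mu$-injectivity constraint, is the most delicate part of the proof and where the hypothesis ($\mathfrak{A}$) enters essentially through Lemma~\ref{lem1_28.09.23}.
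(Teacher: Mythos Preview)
Your Step~1 has a genuine gap. From $T_{k^*}(B)\subset R_{n_j}^{-1}(I^*)$ and Portmanteau you correctly obtain $\nu_\omega(I^*)\ge\nu(T_{k^*}(B))$, but you have no control over $\nu(T_{k^*}(B))$. The sets $T_1(B),\dots,T_N(B)$ are pairwise disjoint, so $\sum_k\nu(T_k(B))\le 1$; the pigeonhole that selects $k^*$ is on \emph{diameters}, not $\nu$-measures, and there is nothing preventing $\nu(T_{k^*}(B))\le 1/N$ or even $\nu(T_{k^*}(B))=0$ (in the construction of Lemma~\ref{lem_07.03.24} the $T_j(B)$ are small intervals in $(0,\varepsilon)$ that need not meet $\supp\nu$). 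Your appeal to $P$-invariance does not help: invariance relates $\nu(A)$ to $\nu$ of \emph{pre}images, not images, so it gives no lower bound on $\nu(T_{k^*}(B))$. Consequently the ``diagonal choice'' as $N\to\infty$ yields intervals of length $\to 0$ but mass lower bound $\to 0$ as well, and no atom of positive mass is produced. (A smaller issue: the claim that $B$ may be taken to be a closed interval is not justified; Lemma~\ref{lem_07.03.24} delivers only a closed set, and passing to the convex hull can destroy the ordering $T_1(B)<\dots<T_N(B)$.)

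The paper avoids this by a different use of the same ingredients. Rather than bounding $\nu_\omega$ via $\nu(T_{k^*}(B))$, it realises the compositions $T_j$ as \emph{future} segments of the random walk. Given any past $(g_1,\dots,g_m)$, pigeonhole (Lemma~\ref{lem:BV-estimate diameter}) selects a good $j$ depending on $(g_1,\dots,g_m)$; with conditional probability at least $\kappa=\min_j\mu^{\otimes i_j}(U_j)>0$, the next block $(g_{m+1},\dots,g_{m+i_j})$ lies in a small neighbourhood $U_j$ of the tuple defining $T_j$, whence $g_1\circ\dots\circ g_{m+i_j}(B)$ has diameter at most $V_m(\omega)/k<\varepsilon$. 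A standard independence argument then gives, for $\mu^{\otimes\mathbb N}$-a.e.\ $\omega$, infinitely many times $m_n$ with $|R_{m_n}(B)|<\varepsilon$. Now Portmanteau yields $\nu_\omega(I)\ge\nu(B)>1-\varepsilon$ for some interval $I$ with $|I|\le\varepsilon$. Letting $\varepsilon\to 0$ along a sequence gives $\nu_\omega=\delta_{\Theta(\omega)}$ directly, so no ``upgrade'' Step~2 is needed at all. The point you are missing is that putting the $T_j$ on the \emph{right} of $R_m$ makes $R_m\circ T_j$ a genuine value of $R_{m+i_j}$ on a positive-probability event, so the mass bound is against $\nu(B)$ rather than $\nu(T_j(B))$.
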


\begin{proof} {The proof is based on the observation that for almost every $g_1, \ldots, g_n$ one may find some family of sequences (of positive measure)  $g_{n+1}, \ldots, g_{n+k}$ such that the measure $\nu(g_{n+k}^{-1}\circ\cdots\circ g_1^{-1}(\cdot))$ is mostly supported on some $\varepsilon$--interval. This would be possible due to Lemma \ref{lem_07.03.24} and the fact that the variation of $g_1\circ\cdots\circ g_n$ are bounded, by Lemma \ref{lem: bnd var}. Passing with $n$ to $\infty$ and $\varepsilon$ to $0$ would complete the proof.}
\\
More precisely, let $\nu$ be a $\mu$--invariant probability measure. It is concentrated on $(0, 1)$, according to Theorem~\ref{thm: atomless}. Let $\Gamma_{\#}^{\mathbb N}\subset\Gamma^{\mathbb N}$ be such that $\mu^{\mathbb N}(\Gamma_{\#}^{\mathbb N})=1$ and
the sequence of measures $(\nu(g_n^{-1}\circ\cdots\circ g_1^{-1}(\cdot)))_{n\ge 1}$ weakly converges to $\nu_{\omega}$ for $\omega\in \Gamma_{\#}^{\mathbb N}$, i.e., for $\omega\in \Gamma_{\#}^{\mathbb N}$ it holds
\begin{equation}\label{e1_24.02.24}
\lim_{n\to\infty} \int_{[0, 1]} \varphi(g_{1}\circ\cdots\circ g_{n}(x))\nu(\d x)=\int_{[0, 1]} \varphi(x)\nu_{\omega}(\d x)
\quad\text{for every $\varphi\in C([0, 1])$}
\end{equation}
(see \cite{F}, Corollary p. 20).
We are going to verify the following claim (see \cite{Czudek_Szarek, Navas}):
\vskip2mm
{\bf Claim:} for any $\varepsilon>0$ there exists $\Gamma_{\varepsilon}^{\mathbb N}\subset\Gamma_{\#}^{\mathbb N}$ with $\mu^{\otimes\mathbb N}(\Gamma_{\varepsilon}^{\mathbb N})>1-\varepsilon$ satisfying the following property:  for every $\omega\in\Gamma_{\varepsilon}^{\mathbb N}$ there exists an interval $I$ of length $|I|\le\varepsilon$ such that $\nu_{\omega}(I)\ge 1-\varepsilon$. Hence we obtain that $\nu_{\omega}=\delta_{\Theta(\omega)}$ for all $\omega$ from some set $\hat\Gamma^{\mathbb N}$ with $\mu^{\otimes\mathbb N}(\hat\Gamma^{\mathbb N})=1$. Here  $\Theta(\omega)$ is a point from $[0, 1]$.

To see this take a sequence $(\varepsilon_n)_{n\ge 1}$, $\varepsilon_n\to 0$ and $\Gamma^{\mathbb N}_{\varepsilon_n}\subset\Gamma_{\#}^{\mathbb N}$ such that $\mu^{\otimes\mathbb N}(\Gamma_{\varepsilon_n}^{\mathbb N})>1-\varepsilon_n$ and for every $\omega\in \Gamma_{\varepsilon_n}^{\mathbb N}$ there exists an interval $I$ of length $|I|\le\varepsilon_n$ such that $\nu_{\omega}(I)\ge 1-\varepsilon_n$. Set $\hat\Gamma^{\mathbb N}=\bigcap_{n=1}^{\infty}\bigcup_{m=n}^{\infty}\Gamma_{\varepsilon_m}^{\mathbb N}$ and observe that $\mu^{\otimes\mathbb N}(\hat\Gamma^{\mathbb N})=1$. Moreover, for $\omega\in \hat\Gamma^{\mathbb N}$ there exists a subsequence $\varepsilon_{m_n}$ converging to $0$ and a sequence $I_n$ such that $|I_{n}|\to 0$
and $\nu_{\omega}(I_n)\to 1$. Since $I_n\subset [0, 1]$ and $[0, 1]$ is compact, passing to a subsequence we may assume that there exists $\Theta(\omega)\in [0, 1]$ such that $I_n\to \Theta(\omega)$. Then we obtain $\nu_{\omega}(\{\Theta(\omega)\})= 1$ and consequently $\nu_{\omega}=\delta_{\Theta(\omega)}$.

\vskip2mm
Fix an $\varepsilon>0$, and let $\tilde\Gamma_{\varepsilon}^{\mathbb N}$ be such that $\mu^{\otimes\mathbb N}(\tilde\Gamma_{\varepsilon}^{\mathbb N})>1-\varepsilon$ 
 and $\sup_{n\ge 1} V_n(\omega)<M$ for some $M>0$ and every $\omega\in \tilde\Gamma_{\varepsilon}^{\mathbb N}$ using Lemma~\ref{lem: bnd var}. Let $k\in\mathbb N$ be such that $M/k<\varepsilon$.

 By Lemma \ref{lem_07.03.24} 
 we find a closed set $B\subset [0, 1]$ with $\nu(B)>1-\varepsilon$ and $(g_{1, 1},\ldots, g_{1, i_1})\in \Gamma^{i_1},\ldots, (g_{k, 1},\ldots, g_{k, i_k})\in \Gamma^{i_k}$
such that one has 
$$
g_{k, 1}\circ\cdots\circ g_{k, i_k}(B)<\,\cdots\,
< g_{2, 1}\circ\cdots\circ g_{2, i_2}(B)< g_{1, 1}\circ\cdots\circ g_{1, i_1}(B)
$$ 
In particular, the sets are pairwise disjoint. By continuity, for every $j\in\{1, \ldots, k\}$ there exists an open neighbourhood $U_j\subset \Gamma^{i_j}$ of $(g_{j, 1},\ldots, g_{j, i_j})$ such that $\hat B_1, \ldots, \hat B_k$ are pairwise disjoint, where 
$$
\hat B_j=\bigcup_{(h_{j, 1},\ldots, h_{j, i_j})\in U_j} h_{j, 1}\circ\cdots\circ h_{j, i_j}(B)
\quad\text{for $j=1,\ldots, k$},
$$ 
while still $\hat{B}_k<\cdots <\hat{B}_2<\hat{B}_1$.
 We see that $\kappa:=\min_{1\le j\le k} \mu^{\otimes i_j}(U_j)>0$. 
 According to Lemma \ref{lem:BV-estimate diameter}, for every $\omega=(g_1,\ldots, g_m)\in\Gamma^{m}$,  $m\in\mathbb N$, there exists $j\in\{1, \ldots, k\}$ such that
 $$
 |g_1\circ\cdots\circ g_m(\hat B_j)|\le \bigvee_0^1 g_1\circ\cdots\circ g_m/k.
 $$
 Hence  
 $$
 |g_1\circ\cdots\circ g_m\circ h_{j, 1}\circ\cdots\circ h_{j, i_j}(B)|\le\bigvee_0^1 g_1\circ\cdots\circ g_m/k
$$
 for $(h_{j, 1},\ldots, h_{j, i_j})\in U_j$.
 
 Let $j_*=\max_{1\le j\le k} i_j$. This shows that for any cylinder in $\Gamma^{\mathbb N}$, defined by fixing the first entries $(g_1,\ldots, g_n)$, the conditional probability that $(g_{n+1}, \ldots, g_{n+j_*})$ are such that
 $$
 |g_{1}\circ\cdots\circ g_{{n+l}}(B)| \ge\bigvee_0^1 g_{i_1}\circ\cdots\circ g_{i_n}/k
 \quad\text{for all $l=1,\ldots, j_*$}
 $$
 is less than $1-\kappa$.

 Hence for $\mu^{\otimes\mathbb N}$--a.e. $\omega=(g_1, g_2, \ldots)\in\Gamma^{\mathbb N}$ there exist  increasing sequences $(m_n)_{n\ge 1}$ and $(\tilde m_n)_{n\ge 1}$ (depending on $\omega$)
 such that $0<m_n-\tilde m_n\le j_*$ and 
 $$
 |g_{1}\circ\cdots\circ g_{{m_n}}(B)| \le\bigvee_0^1 g_{1}\circ\cdots\circ g_{\tilde m_n}/k\quad\text{for all $n\ge 1$}.
 $$
 Hence there exists 
 $\tilde\Gamma_{\varepsilon}^{\mathbb N}\subset\Gamma_{\varepsilon}^{\mathbb N}$ such that still $\mu^{\otimes\mathbb N}(\tilde \Gamma_{\varepsilon}^{\mathbb N})>1-\varepsilon$ and 
 for $\omega=(g_1, g_2, \ldots)\in\tilde\Gamma_{\varepsilon}^{\mathbb N}$ there exists an increasing sequence $(m_n)_{n\ge 1}$ (depending on $\omega$)
 such that 
 \begin{equation}\label{e1_6.08.24}
 |g_{1}\circ\cdots\circ g_{{m_n}}(B)| \le M/k<\varepsilon\quad\text{for all $n\ge 1$}.
 \end{equation}
 
Now take $\omega=(g_1, g_2, \ldots)\in\tilde\Gamma_{\varepsilon}^{\mathbb N}$. Since there exists a subsequence $(m_n)_{n\ge 1}$ of positive integers such that (\ref{e1_6.08.24}) is satisfied, passing to a subsequence if necessary, we may assume that there exists a closed interval $I\subset [0, 1]$ with $|I|<\varepsilon$ such that
 $$
 g_{1}\circ\cdots\circ g_{m_n}(B)\subset I \quad\text{for all $n\ge 1$}.
 $$
 From (\ref{e1_24.02.24}) we know that
 the sequence of measures $(\nu(g_{m_n}^{-1}\circ\cdots\circ g_1^{-1}))_{n\ge 1}$ as 
 a subsequence of  $(\nu(g_{n}^{-1}\circ\cdots\circ g_1^{-1}))_{n\ge 1}$ converges weakly to $\nu_{\omega}$. Hence, by the Portmanteau Theorem, we obtain
 $\nu_{\omega} (I)\ge \nu(g_{m_n}^{-1}\circ\cdots\circ g_1^{-1}(I))\ge \nu(B)>1-\varepsilon$ and the proof is complete.
\end{proof}

From the above theorem we obtain the following:

\begin{cor}[Corollary~\ref{cor: unique intro}]\label{c1_21_05_24}
Assume that a stochastic dynamical system $(\Gamma, \mu)$ satisfies the assumptions of Theorem \ref{thm_21_05_24}. Then it is uniquely ergodic.
\end{cor}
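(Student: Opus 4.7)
The plan is short: unique ergodicity decomposes into existence and uniqueness of a $\mu$-invariant Borel probability measure, and both are essentially already available.

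For existence, I would invoke the discussion in Section 2: the Markov operator $P$ associated with $(\Gamma,\mu)$ via (\ref{e1_31.08.24}) is a Feller operator on the compact metric space $[0,1]$, so the standard Banach-limit / Krylov--Bogolyubov argument (explicitly written out in Section 2 after the definition of Feller operator) produces at least one $\mu$-invariant $\nu\in\mathcal P$.

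For uniqueness, I would combine Theorem \ref{thm_21_05_24} with Lemma \ref{lem:all primal implies unique}. The hypotheses of the corollary coincide with those of Theorem \ref{thm_21_05_24}, so every $\mu$-invariant Borel probability measure is $\mu$-proximal. But Lemma \ref{lem:all primal implies unique} says precisely that a stochastic dynamical system all of whose $\mu$-invariant measures are $\mu$-proximal admits at most one such measure. Hence uniqueness follows.

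Putting the two together, there exists exactly one $\mu$-invariant Borel probability measure, which is the definition of unique ergodicity. There is no real obstacle here; the corollary is a direct synthesis of Theorem \ref{thm_21_05_24}, Lemma \ref{lem:all primal implies unique}, and the Feller/compactness existence argument recorded in Section 2.
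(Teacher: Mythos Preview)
Your proposal is correct and matches the paper's own proof essentially verbatim: the paper also derives uniqueness by combining Theorem~\ref{thm_21_05_24} (every $\mu$-invariant measure is $\mu$-proximal) with Lemma~\ref{lem:all primal implies unique}, while existence is taken for granted from the Feller/compactness discussion in Section~2.
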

\begin{proof} From Theorem \ref{thm_21_05_24} it follows that every $\mu$--invariant measure $\nu$ is $\mu$--proximal. 
By compactness, such a measure exists.
Lemma \ref{lem:all primal implies unique} yields that the $\mu$--invariant measure is unique. 
\end{proof}

\section{Local contractivity}

We can now state the analogue of Proposition 
\ref{p_1_03.05.25} for the studied system.

\begin{thm}\label{T_1_03.05.25} Let a stochastic dynamical system $(\Gamma,\mu)$ be $\mu$--injective and satisfy \ref{A+}. Then 
there exists a unique $\mu$--invariant measure $\nu$. 
Moreover, $h_{\mu}(\nu)>0$ and for every ${h}\in (0, h_{\mu}(\nu))$ and $\mu^{\mathbb{N}}$-a.e. $\omega=(g_1, g_2, \ldots)\in \Gamma^{\mathbb{N}}$ and $\nu$--a.e. $x\in [0,1]$ there exists a closed interval $I=I(\omega,x)$ with $x\in \mathrm{int}(I)$ such that
	$$
	|g_n\circ \ldots \circ g_1(I)|\leq \exp(-n\cdot {h})\quad \text{ for all $n\ge 1$}.
	$$
\end{thm}
\begin{proof} By Corollary \ref{c1_21_05_24} the system $(\Gamma, \mu)$ has a unique invariant measure $\nu$. Obviously, $\nu$ is ergodic. Moreover $\nu$ is atomless, by Theoerem \ref{thm: atomless}. From Proposition \ref{prop: h pos} it follows that either $h_{\mu}(\nu)>0$ (possibly infinite) or 
$\d_{\nu} g(x)\equiv 1$ for $\nu$--a.e. $x\in [0, 1]$ and  $\mu$--a.e. $g\in\Gamma$. Further, condition~\ref{A+} implies that $(\Gamma, \mu)$ is contracting in the neighbourhood of $0$ and $0\in\supp\nu$, by Lemma \ref{l1_18.02.24}. Hence the condition $\d_{\nu} g(x)\equiv 1$ for $\nu$--a.e. $x\in [0, 1]$ and  $\mu$--a.e. $g\in\Gamma$ does not hold, and consequently $h_{\mu}(\nu)>0$. Finally, from Lemma 3.1 in \cite{BOS} it follows that $\ln^+J\in L^1(\Gamma\times[0,1],\mu\times\nu)$ and all assumptions of Theorem \ref{p_1_03.05.25} are satisfied. We complete the proof by applying Theorem \ref{p_1_03.05.25}.
\end{proof}

\section{Stability}\label{sec: stab}

When the measure $\mu$ is supported on monotonic maps stability of the corresponding Markov operator is easily derived from proximality. In fact, we have the following proposition:

\begin{prop}\label{prop:monotone case asympt stab}
Let $\Gamma$ consist of strictly monotonic continuous functions and let $\mu$ be a Borel probability measure on $\Gamma$ such that $\Gamma=\supp\mu$. Assume that every $\mu$-invariant probability measure of the stochastic dynamical system $(\Gamma,\mu)$ is atomless. If $(\Gamma, \mu)$ has a $\mu$--invariant probability measure $\nu$ that is $\mu$--proximal and $\{0, 1\}\subset\supp\nu$, then the Markov operator associated to $(\Gamma,\mu)$ is asymptotically stable.
\end{prop}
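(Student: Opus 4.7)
The plan is to establish that, for the dual Feller operator $U$ of $P$, one has $U^n f(x)\to\nu(f)$ for every $x\in[0,1]$ and every $f\in C([0,1])$; once this pointwise convergence is in hand, bounded convergence applied to $P^n\eta(f)=\int U^n f\,d\eta$ gives $P^n\eta\to\nu$ weakly for every $\eta\in\mathcal P$, which is the claim. Since the maps $(g_i)$ are i.i.d., the random functions $L_n:=g_n\circ\cdots\circ g_1$ and $R_n:=g_1\circ\cdots\circ g_n$ have the same law on $[0,1]$, so I shall work with the representation $U^n f(x)=\mathbb E[f(R_n(x))]$.

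For $x\in(0,1)$, proximality yields $(R_n)_*\nu\to\delta_{\Theta(\omega)}$ weakly for $\mu^{\otimes\mathbb N}$-a.e.\ $\omega$, and I claim this forces $R_n(x)\to\Theta(\omega)$ a.s. If, passing to a subsequence, $R_{n_k}(x)\to c$ with $c<\Theta(\omega)$, then by pigeonhole one may further assume all $R_{n_k}$ share a common monotonicity. Choose $t\in(c,\Theta(\omega))$. In the increasing case $R_{n_k}^{-1}([0,t])\supset[0,x]$ eventually, so $(R_{n_k})_*\nu([0,t])\geq\nu([0,x])>0$ using $0\in\supp\nu$ and atomlessness of $\nu$; in the decreasing case $R_{n_k}^{-1}([0,t])\supset[x,1]$ eventually, so $(R_{n_k})_*\nu([0,t])\geq\nu([x,1])>0$ using $1\in\supp\nu$. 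Either bound contradicts the weak limit at the continuity point $t$; the symmetric argument (swapping the roles of $0$ and $1$ in $\supp\nu$) rules out $c>\Theta(\omega)$. Bounded convergence then yields $U^n f(x)=\mathbb E[f(R_n(x))]\to\mathbb E[f(\Theta)]=\nu(f)$ for every $x\in(0,1)$.

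For $x\in\{0,1\}$, consider the Markov chain $X_n=L_n(x)$ and its exit time $\tau:=\inf\{n\geq 0:X_n\in(0,1)\}$. If $\{0,1\}$ were absorbing, i.e.\ $g(0),g(1)\in\{0,1\}$ for $\mu$-a.e.\ $g$, the restriction of the chain to $\{0,1\}$ would be a genuine two-state Markov chain whose stationary probability lifts to a purely atomic $P$-invariant measure, contradicting the hypothesis that every invariant measure is atomless. Hence at least one of the leak probabilities $\mu(\{g:g(0)\in(0,1)\})$ and $\mu(\{g:g(1)\in(0,1)\})$ is positive; coupled with the fact that $\delta_0$ and $\delta_1$ are themselves not invariant, a short two-state analysis shows $\mathbb P(\tau>n)$ decays geometrically. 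The strong Markov property at $\tau$ gives
\[
U^n f(x)=\mathbb E\bigl[f(X_n)\mathbf{1}_{\{\tau>n\}}\bigr]+\sum_{k=1}^n\int_{(0,1)}U^{n-k}f(y)\,\mathbb P(\tau=k,\,X_\tau\in dy).
\]
The first term is dominated by $\|f\|\,\mathbb P(\tau>n)\to 0$. In the sum, for each fixed $k$ and $y\in(0,1)$ the interior step gives $U^{n-k}f(y)\to\nu(f)$ as $n\to\infty$; since the integrands are bounded by $\|f\|$ and $\sum_{k\ge 1}\mathbb P(\tau=k)=1$, a standard truncation and dominated convergence argument yields $U^n f(x)\to\nu(f)$ for $x\in\{0,1\}$ as well.

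The hardest part is the endpoint analysis. Interior convergence reduces to a direct CDF comparison with the proximal limit once $\{0,1\}\subset\supp\nu$ and atomlessness are used, but at $x\in\{0,1\}$ proximality supplies no contraction of $R_n(x)$ to $\Theta(\omega)$; the atomlessness hypothesis is therefore used once more --- via ruling out $\{0,1\}$ as absorbing --- together with a strong Markov decomposition at the exit time $\tau$ to transport the interior convergence to the endpoints.
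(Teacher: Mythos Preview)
Your proof is correct and takes a genuinely different route from the paper's.

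For interior points you prove $R_n(x)\to\Theta(\omega)$ a.s.\ via a CDF/Portmanteau comparison that exploits monotonicity of $R_n$ together with $\{0,1\}\subset\supp\nu$; the paper instead proves the diameter contraction $|g_1\circ\cdots\circ g_n([x,y])|\to 0$ by locating preimages $x_n\in(0,x)$ and $y_n\in(y,1)$ that land near $\Theta(\omega)$. These two statements are essentially equivalent, and your argument is if anything slightly more direct. For the endpoints you use a first-exit-time decomposition and the strong Markov property, invoking atomlessness of all invariant measures to rule out $\{0,1\}$ being absorbing and to force $\delta_0,\delta_1$ non-invariant (hence positive two-step leak probability, so $\tau<\infty$ a.s.). The paper instead handles the boundary at the level of measures: it picks $N'$ so that $P^{N'}\nu_i\bigl((0,1)\bigr)>1-\varepsilon/(4L_f)$ via Ces\`aro averages (again using atomlessness of any invariant limit), then applies the interior contraction on $(0,1)^2$ with a controlled remainder. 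Both routes work; the paper's is shorter and stays with Lipschitz test functions and the Wasserstein distance, while yours yields the slightly stronger conclusion that $U^n f(x)\to\langle\nu,f\rangle$ for \emph{every} $x\in[0,1]$ and every $f\in C([0,1])$. One minor remark: for the truncation-and-dominated-convergence step you only need $\mathbb P(\tau<\infty)=1$, not the geometric decay you state (though the geometric decay is indeed true by your two-state analysis).
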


\begin{proof} {Because the measure $\nu$ is $\mu$--proximal, all the functions are monotonic and $0, 1$ are in the support of $\nu$, one could prove that almost surely $(g_{1}\circ\cdots\circ g_{n}(x))_{n\ge 1}$ converges and its limit is independent of the starting point $x$. Hence one could conclude that the iterates $P^n\delta_x$ and $P^n\delta_y$ are close for every $x, y$. Observing that $P^n\tilde\nu=\int_{[0, 1]} P^n\delta_x\,\tilde\nu(\d x)$ for any probability measure $\tilde \nu$, one could obtain the proof.}
\\

It suffices to show that
\begin{equation}\label{e1_4.10.25}
\lim_{n\to\infty} \left|\langle P^n\nu_1, \varphi\rangle-
\langle P^n\nu_2, \varphi\rangle\right|=0
\end{equation}
for every $\nu_1, \nu_2\in\mathcal P$ and an arbitrary Lipschitz function $\varphi: [0, 1]\to\mathbb R$ due to density of the space all Lipschitz functions in  $C([0, 1])$. 

The $\mu$--proximality of $\nu$ yields a map $\Theta:\Gamma^\N\to [0,1]$ such that for $\mu^{\otimes\N }$--a.e. $\omega=(g_1,g_2,\dots)\in \Gamma^\N$, the sequence of measures $\nu(g_n^{-1}\circ\dots\circ g_1^{-1}(\cdot))$ converges weakly to $\delta_{\Theta(\omega)}$. Since $\nu$ is atomless,
\[
0 = \nu\bigl(\{0\}\bigr) = \int_{\Gamma^\N} \delta_{\Theta(\omega)}\bigl(\{0\}\bigr) \mu^{\otimes\N}(d\omega).
\]
Thus, $\Theta(\omega)\neq 0$ for $\mu^{\otimes\N}$-a.e. $\omega$. A similar argument applies to $\{1\}$. Thus, $\Theta(\omega)\not\in\{0,1\}$ for $\mu^{\otimes\N}$--a.e. $\omega$.

Fix $x, y\in (0, 1)$, $x<y$. Since $\nu$ has no atoms and $0,1\in\supp\nu$ we see that $\nu((0, x))>0$ and $\nu((y, 1))>0$. From proximality it follows that for $\mu^{\otimes \mathbb N}$--a.e. $\omega=(g_1, g_2,\ldots)$ and for every $\varepsilon>0$ sufficiently small such that $I_\omega^\varepsilon:=(\Theta(\omega)-\varepsilon,\Theta(\omega)+\varepsilon)\subset (0,1)$, there exists $N\in\mathbb N$ such that for $n\ge N$ we may find $x_n\in (0, x)$ and $y_n\in (y, 1)$ satisfying 
$g_1\circ \cdots\circ g_n(x_n),
g_1\circ \cdots\circ g_n(y_n)\in I_\omega^\varepsilon$. In fact, because $\nu$ is atomless, weak convergence implies 
\[
    \nu\bigl(g_n^{-1}\circ\dots\circ g_1^{-1}(I_\omega^\varepsilon)\bigr)\ \to\ \delta_{\Theta(\omega)}(I_\omega^\varepsilon)=1.
\]
So, for all $n$ sufficiently large, $(g_1\circ\dots\circ g_n)^{-1}(I_\omega^\varepsilon)\cap(0,x)\neq \emptyset$, and similarly for $(y,1)$.

Hence, by monotonicity of $g_i$, we have
$$
\limsup_{n\to\infty} |g_{1}\circ\cdots\circ g_{n} ([x, y ])|\le \limsup_{n\to\infty} \bigl| g_1\circ\cdots\circ g_n(x_n) - g_1\circ\cdots\circ g_n(y_n)\bigl|
\le 2\varepsilon
$$
for $\mu^{\otimes \mathbb N}$--a.e. $(g_{1}, g_{2},\ldots)$.
Since $\varepsilon>0$ was arbitrarily small, we obtain
$$
\lim_{n\to\infty} |g_{1}\circ\cdots\circ g_{n} ([x, y ])|=0\qquad\text{for $\mu^{\otimes \mathbb N}$--a.e. $(g_{1}, g_{2},\ldots)$}.
$$

Fix now $\nu_1, \nu_2\in\mathcal P$ and let  $\varphi: [0, 1]\to\mathbb R$ be a Lipschitz function with Lipschitz constant $L_\varphi>0$. Put $\bar{\nu}:=(\nu_1+\nu_2)/2$. Let $\varepsilon>0$. The C\'esaro averages $(\bar{\nu} + P\bar{\nu}+\dots+P^{n-1}\bar{\nu})/n$ converge weakly to a $\mu$--invariant measure $\nu_*$. By assumption $\nu_*$ is atomless. Thus, for some $N'\in\N$, $P^{N'}\bar{\nu}((0,1))>1-\varepsilon/(8L_\varphi)$. Consequently, $P^{N'}\nu_i\bigl((0,1)\bigr)>1-\varepsilon/(4L_\varphi)$ for $i=1,2$. Put $\nu'_i:=P^{N'}\nu_i$.

Then for $n\geq N$ we have
$$
\begin{aligned}
&\left|\langle P^{n+N'}\nu_1, \varphi\rangle-\langle P^{n+N'}\nu_2, \varphi\rangle\right|
= \left|\langle P^{n}\nu'_1, \varphi\rangle-\langle P^{n}\nu'_2, \varphi\rangle\right|\\&\le L_\varphi \int_{\Gamma^n}\int_ {[0, 1]^2} |g_1\circ \cdots\circ g_n(x)-g_1\circ \cdots\circ g_n(y)|\mu^{\otimes n}(\d g_1,\ldots, \d g_n)\nu'_1(\d x)\nu'_2(\d y)\\
&\le L_\varphi \int_{\Gamma^{\mathbb N}}\int_{(0, 1)^2} |g_1\circ \cdots\circ g_n([x, y])|\mu^{\otimes \mathbb N}(\d g_1, \d g_2, \ldots)\nu_1(\d x)\nu_2(\d y) \\
& \qquad\  + \ 2L_\varphi(\nu'_1\otimes\nu'_2)\bigl([0,1]^2\setminus (0,1)^2\bigr) 
\end{aligned}
$$
On the other hand, we have
\[
(\nu'_1\otimes\nu'_2)\bigl([0,1]^2\setminus (0,1)^2\bigr)\ \le \ \nu'_1(\{0,1\}) + \nu'_2(\{0,1\})\  <\ 2\varepsilon/(4L_\varphi)=\varepsilon/(2L_\varphi).
\]
By Lebesgue's Dominated Convergence Theorem we obtain that
$$
\begin{aligned}
&\limsup_{n\to\infty} \left|\langle P^n\nu_1, \varphi\rangle-
\langle P^n\nu_2, \varphi\rangle\right|\\
& \le L_\varphi \int_{\Gamma^n}\int_{(0, 1)^2} \lim_{n\to\infty} |g_1\circ \cdots\circ g_n([x, y])|\mu^{\otimes \mathbb N}(\d g_1, \d g_2, \ldots)\nu_1(\d x)\nu_2(\d y)\\
&+\varepsilon/(2L_\varphi)=\varepsilon/(2L_\varphi),
\end{aligned}
$$
and since $\varepsilon>0$ was arbitrary condition (\ref{e1_4.10.25}) is satisfied, and the proof of stability is completed.
\end{proof}

\begin{rem}
    A particular consequence of Proposition \ref{prop:monotone case asympt stab} is that there is only one $\mu$--invariant measure $\nu$. Moreover for any continuous function $\varphi\in C([0, 1])$ and $\tilde\nu\in\mathcal P$ we will have
    $$
        \lim_{n\to\infty} \langle P^n\tilde\nu, \varphi\rangle=
      \langle \nu, \varphi\rangle.
    $$
\end{rem}

\begin{rem}
    Let $\Gamma$ consist of strictly monotonic functions. For any Borel probability measure $\mu$ on $\Gamma$ the stochastic dynamical system $(\Gamma,\mu)$ is then $\mu$--injective. If $\Gamma$ also satisfies our key condition~\ref{A}, then any $\mu$--invariant measure is atomless, according to Theorem~\ref{thm: atomless}. Thus, a crucial assumption of Proposition \ref{prop:monotone case asympt stab} is satisfied. Assume additionally that $\supp\mu=\Gamma$. 
     
    If there exists now a $\mu$--invariant measure $\nu$ that is $\mu$--proximal and has $\{0,1\}\subset \supp\nu$, then Proposition \ref{prop:monotone case asympt stab} implies that the Markov operator of $(\Gamma,\mu)$ is asymptotically stable. In particular, $\nu$ is the only $\mu$--invariant measure of $(\Gamma,\mu)$.
    
    In view of Theorem \ref{thm_21_05_24}, if also $(\Gamma,\mu)$ is below the diagonal close to 0 or above the diagonal  close to 1 (hence satisfies \ref{A+}), then any $\mu$--invariant Borel probability measure $\nu$ is $\mu$--proximal. We cannot conclude that $\{0,1\}\subset\supp\nu$ though, which would allow us to conclude by means of Proposition \ref{prop:monotone case asympt stab} that the Markov operator corresponding to $(\Gamma,\mu)$ is asymptotically stable.
\end{rem}
\vskip 2mm

In the previous proof, we used crucially the fact that $\Gamma$ consists of strictly monotonic functions.
Now we are going to prove asymptotic stability of the Markov operator of some $\mu$--injective stochastic dynamical systems without the necessity for $\Gamma$ to consist of strictly monotonic functions. Here a different approach is required.

The next lemma says that the iterates of the Markov operator starting at an arbitrary probability are concentrated around any point from the support of a $\mu$--invariant measure. Its proof is in the spirit of the lower bound technique derived by Lasota and Yorke (see \cite{L_Y, Sz}). 

\begin{lem}\label{L1_27.05.24}
Let $\Gamma$ satisfy \ref{A+}, and let $\mu$ be a Borel probability measure on $\Gamma$ with $\supp\mu=\Gamma$.
Assume that $(\Gamma,\mu)$ has a unique $\mu$--invariant Borel probability measure $\nu$. Then for every point $x_0\in\supp\nu$ and open set $I\ni x_0$ there exist $\alpha>0$ and $n\in\mathbb N$ such that
$$
P^{n}\tilde\nu(I)\ge\alpha
$$
for every probability measure $\tilde\nu\in\mathcal P$, for $P$ defined by (\ref{e1_31.08.24}).
\end{lem}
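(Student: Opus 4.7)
The plan is to reduce the statement to a uniform kernel lower bound and then to split the push into two phases: first drive an arbitrary initial distribution into a small neighbourhood of $0$, then route that mass from $0$ into the target open set $I$. Writing $K_n(y,\cdot):=P^n\delta_y$, it suffices to show that $\inf_{y\in[0,1]}K_n(y,I)\ge\alpha$ for some $n\in\mathbb N$ and $\alpha>0$, since then $P^n\tilde\nu(I)=\int K_n(y,I)\,d\tilde\nu(y)\ge\alpha$ for every $\tilde\nu\in\mathcal P$. I only treat the ``below the diagonal on the left'' case; the symmetric one is identical after swapping the roles of $0$ and $1$.

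The main step is to show that for every sufficiently small $\varepsilon>0$ there exist $N\in\mathbb N$ and $\beta>0$ with $K_N(y,[0,\varepsilon))\ge\beta$ for every $y\in[0,1]$. By the remark following the definition of ``below the diagonal on the left'' one can fix $g^*\in\Gamma_0$ and $\varepsilon_0>0$ with $g^*(x)<x$ on $(0,\varepsilon_0)$; for any $\varepsilon\in(0,\varepsilon_0)$ compactness gives $g^*([0,\varepsilon])\subsetneq[0,\varepsilon)$, so continuity together with $\supp\mu=\Gamma$ provides an open $G^*\ni g^*$ in $\Gamma$ with $\mu(G^*)>0$ and $h([0,\varepsilon))\subset[0,\varepsilon)$ for every $h\in G^*$, a ``stay near $0$'' step of positive $\mu$-probability. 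The crucial claim is then that the forward orbit $O(y):=\{g_1\circ\cdots\circ g_k(y):k\ge 0,\,g_i\in\Gamma\}$ satisfies $\inf O(y)=0$ for every $y\in[0,1]$: if $a:=\inf O(y)>0$, then ($\mathfrak{A}$) furnishes $g\in\Gamma_0$ with $g(a)\in(0,a)$, continuity of $g$ at $a$ forces $g(z)<a$ on a neighbourhood of $a$, and since $O(y)$ has elements arbitrarily close to $a$ from above, applying $g$ produces elements of $O(y)$ strictly below $a$, a contradiction. Hence every $y$ admits a finite composition landing in $[0,\varepsilon)$; continuity of the composition map and $\supp\mu=\Gamma$ upgrade this to an open neighbourhood $V_y\ni y$ in $[0,1]$, an integer $k_y$, and an open $U_y\subset\Gamma^{k_y}$ of positive $\mu^{\otimes k_y}$-measure pushing $V_y$ into $[0,\varepsilon)$. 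Compactness of $[0,1]$ extracts a finite subcover $V_{y_1},\ldots,V_{y_p}$, and padding the shorter histories by $G^*$-steps to the common length $N=\max_j k_{y_j}$ yields the claim with $\beta=\min_j \mu^{\otimes k_{y_j}}(U_{y_j})\,\mu(G^*)^{N-k_{y_j}}$.

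For the second step, unique ergodicity of the Feller operator $P$ on the compact space $[0,1]$ implies that the Ces\`aro averages $\frac1n\sum_{j=1}^n P^j\delta_0$ converge weakly to $\nu$; since $x_0\in\supp\nu\cap I$ and $I$ is open, $\nu(I)>0$, and the Portmanteau theorem gives $k\in\mathbb N$ with $P^k\delta_0(I)>0$. Consequently the open set $\{(g_1,\ldots,g_k)\in\Gamma^k:g_1\circ\cdots\circ g_k(0)\in I\}$ is non-empty, and continuity together with $\supp\mu=\Gamma$ produces an open $V\ni 0$ in $[0,1]$ and an open $U\subset\Gamma^k$ with $\mu^{\otimes k}(U)>0$ and $h_1\circ\cdots\circ h_k(V)\subset I$ for every $(h_1,\ldots,h_k)\in U$. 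Choosing $\varepsilon>0$ so small that $[0,\varepsilon)\subset V$ and combining the two phases via Chapman--Kolmogorov,
\[
K_{N+k}(y,I)\ \ge\ \int_{[0,\varepsilon)}K_k(z,I)\,K_N(y,dz)\ \ge\ \mu^{\otimes k}(U)\cdot\beta\ =:\ \alpha>0
\]
for every $y\in[0,1]$, which finishes the proof. The principal obstacle is the global contraction claim $\inf O(y)=0$: the below-the-diagonal-on-the-left hypothesis only produces a local contraction near $0$, and lifting it to global accessibility of $0$ from every $y\in[0,1]$ requires combining ($\mathfrak{A}$) with the accumulation-point argument above.
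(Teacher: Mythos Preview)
Your proof is correct and follows essentially the same two-phase strategy as the paper: first push any initial point uniformly into a small neighbourhood of $0$ using $(\mathfrak{A})$, compactness, and padding by maps that keep $[0,\varepsilon)$ invariant; then use unique ergodicity and the Ces\`aro averages of $P^j\delta_0$ to route mass from that neighbourhood into $I$, combining via the Markov property. The only differences are expository: you supply the infimum-of-orbit argument to justify that every $y$ can be driven arbitrarily close to $0$ (the paper simply asserts this from $(\mathfrak{A})$), and you construct the ``stay near $0$'' set $G^*$ explicitly via compactness of $g^*([0,\varepsilon])$, whereas the paper just invokes the below-the-diagonal hypothesis directly.
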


\begin{proof} {The idea of this proof is quite simple. One could easily see that the action of maps on the support of an invariant measure is minimal (the neighbourhood of every point in the support is reachable from another point and the transition probability is bounded from below by some positive constant). Thus one needs to evaluate the probability of entrance in the neighbourhood of some point of the support. Since all this takes place on a compact set this positive probability may be bounded from below by a positive constant. Putting together these two effects would conclude the proof.}
\\
Let $(\Gamma,\mu)$ be given, and let $\nu$ be its unique $\mu$--invariant Borel probability measure. Fix $x\in\supp\nu$ and let $I$ be an open neighbourhood of $x$. Since 
$$
\frac{\delta_0+P\delta_0+\cdots+ P^{n-1}\delta_0}{n}
$$
converges weakly to $\nu$, by the Portmanteau Theorem there exists $m_0\in\mathbb N$ such that
 $$
 P^{m_0}\delta_0 (I)>\nu(I)/2:=\gamma
 $$
 and, consequently, we have for all $x$ sufficiently close to $0$
 \[
P^{m_0}\delta_x (I)>\gamma,
 \]
 by the weak continuity of the operator $P^{m_0}$. 

 Since the system is above/below the diagonal close to zero/one, we may choose $b>0$ such that $\mu(\{g\in \Gamma: g([0, b))\subset [0, b) \})>0$.
 Further, by \ref{A} (see also Lemma \ref{r1_02.10.25}) for every $y\in [0, 1]$ we may find $m(y)\in\mathbb N$ and a sequence ${\bf g}_y=(g_1,\ldots, g_{m(y)})\in\Gamma^{m(y)}$ such that $g_{1}\circ\cdots\circ g_{m(y)}(y)\in [0, b)$. Hence, by continuity, there exist open neighbourhood  $U_y$ of $y$ and $V_{{\bf g}_y}\subset \Gamma^{m(y)}$ of ${\bf g}_y$ such that for all $z\in U_y$ and $(h_1,\ldots, h_{m(y)})\in V_{{\bf g}_y}$ we have
 $h_{1}\circ\cdots\circ h_{m(y)} (z)\in [0, b)$, by Lemma \ref{r1_02.10.25}. Obviously,
 $\mu^{\otimes m(y)}(V_{{\bf g}_y})>0$. By compactness of $[0, 1]$ we choose $x_1,\ldots, x_k\in [0, 1]$ such that $[0, 1]\subset\bigcup_{i=1}^k U_{x_i}$, and let $m=\max_{1\le i\le k} m(x_i)$. Let $\hat\Gamma:=\{g\in\Gamma: g([0, b))\subset [0, b)\}$. Set $V_y:=V_{{\bf g}_y}\times \hat\Gamma^{m-m(y)}$ and note that $\mu^{\otimes m}(V_y)>0$. We have also $h_1\circ\cdots\circ h_m(U_y)\subset [0, b)$ for every $(h_1,\ldots, h_m)\in V_y$. Put
 $$
 \kappa:=\min_{1\le i\le k} \mu^{\otimes m}(V_{x_i})
 $$
 and observe that 
 $$
  P^m\delta_y([0, b))\ge\idotsint_{V_{x_i}} {\bf 1}_{[0, b)}(g_1\circ\cdots\circ g_m(y))\mu^{\otimes m}(\d (g_1,\ldots, g_m))\ge\kappa,
  $$
 where $x_i$ is such that $y\in U_{x_i}$. Hence
 for every $\tilde\nu\in\mathcal P$ we have
 $$
 P^m\tilde\nu([0, b))=\int_{\supp\tilde\nu} P^m\delta_y([0, b))\tilde\nu(\d y)\ge\kappa.
 $$
  Finally, from the Markov property we obtain
  $$
  P^{m+m_0}\tilde\nu(I)\ge\int_{[0, b)}P^{m_0}\delta_y(I)P^m\tilde\nu(\d y)\ge \gamma\kappa:=\alpha
  $$
  for every probability measure $\tilde\nu\in\mathcal P$. This completes the proof.
  \end{proof}

We are in a position to prove the stability of the considered system.

\begin{thm}[Theorem~\ref{thm: stab intro}]\label{thm} Let $\Gamma$ satisfy condition~\ref{A+}, and let $\mu$ be a Borel probability measure on $\Gamma$ with $\supp\mu=\Gamma$. Then its corresponding Markov operator is asymptotically stable.
 \end{thm}
 
 \begin{proof} { In the proof we make use of the lower bound technique developed in \cite{L_Y, Sz}. In fact, one could notice that iterating an arbitrary operator of a measure an $\alpha$--part of it would be brought to some small interval, by Lemma \ref{L1_27.05.24}. On the other hand, since that interval is arbitrary, it might be chosen in such a way that, by Proposition~3.3 in \cite{BOS}, with high probability  the composition of maps would decrease its length. This would lead to a conclusion that after iterating two different measures one might subtract in both of them $\alpha$-part in such a way that these parts would be ever close to each other. Iterating this scheme one would obtain the conclusion.  
 }

 Since there exists a $\mu$--invariant probability measure $\nu$, to prove stability it is enough to show that for every $\varepsilon>0$ and any two Borel probability measures $\tilde\nu_1, \tilde\nu_2$ on $[0,1]$, we have
 \begin{equation}\label{e1_27.05.24}
 \limsup_{n\to\infty} d_{W}(P^n\tilde\nu_1, P^n\tilde\nu_2)<\varepsilon.
 \end{equation}
 
Now fix $\varepsilon>0$. According to Theorem \ref{T_1_03.05.25} we may  choose $\theta>0$ such that the set of all $(g_1,\ldots, g_n, \ldots)\in \Gamma^{\mathbb{N}}$
for which 
\begin{equation}\label{e1_9.10.23}
|g_n\circ \ldots \circ g_1((x_0-\theta/2, x_0+\theta/2))|\leq \exp(-n\cdot \widetilde{h})
\end{equation}
has the measure $\mu^{\mathbb{N}}$ greater than $1-\varepsilon/4$. Denote this set by $\tilde\Gamma^{\mathbb{N}}$ and let $\tilde\Gamma^{n}$ be its projection on $\Gamma^n$, for $n\in\mathbb N$.

At the beginning, we prove condition~(\ref{e1_27.05.24}) for any two measures $\nu_1$ and $\nu_2$ supported on the set $I$. To do this take a $1$--Lipschitz function $\varphi$. Taking if necessary the function $\varphi-\varphi(0)$, we may assume additionally that $\varphi(0)=0$. Therefore, for any $m\in\mathbb N$ we have
  \begin{equation}\label{e2_27.05.24}
  \begin{aligned}
  &\left |\langle P^m\nu_1, \varphi\rangle
  -\langle P^m\nu_2, \varphi\rangle \right|\\
  &\le 
  \idotsint_{\Gamma^m\times [0, 1]^2} |\varphi(g_m\circ\cdots\circ g_1(x))-
  \varphi(g_m\circ\cdots\circ g_1(y))|\mu^{\otimes m}(\d (g_1,\ldots, g_m))\nu_1(\d x)\nu_2(\d y)\\
  &\le \idotsint_{\tilde\Gamma^m\times [0, 1]^2} |g_m\circ\cdots\circ g_1(x))-
  g_m\circ\cdots\circ g_1(y)|\mu^{\otimes m}(\d (g_1,\ldots, g_m))\nu_1(\d x)\nu_2(\d y)\\
  &+\idotsint_{(\Gamma^m\setminus\tilde\Gamma^m)\times [0, 1]^2} 2\,\mu^{\otimes m}(\d (g_1,\ldots, g_m))\nu_1(\d x)\nu_2(\d y)
  \le \exp(-m\cdot \widetilde{h})+\tfrac{2\varepsilon}{4},
  \end{aligned}
 \end{equation}
  where the last inequality is obtained by (\ref{e1_9.10.23}) and the fact that $\mu^{\otimes m}(\Gamma^m\setminus\tilde\Gamma^m)<\varepsilon/4$.

  By Lemma \ref{L1_27.05.24} there exists $n_0\ge 1$ and $\alpha>0$ such that  
  $$
  P^{n_0}\tilde\nu(I)\ge\alpha
  $$
  for every $\tilde\nu\in\mathcal P$. Consequently for any measure $\tilde\nu\in\mathcal P$ we have two probability measures $\tilde\nu^+$ and $\tilde\nu^-$ such that $\tilde\nu^+$ is supported on $I$ and
  \begin{equation}\label{e11_09.10.23}
  P^{n_0}\tilde\nu=\alpha\tilde\nu^+ +(1-\alpha)\tilde\nu^-.
\end{equation}
  In fact, we define
  $$
  \tilde\nu^+(\cdot)=\frac{P^{n_0}\tilde\nu(\cdot\cap I)}{P^{n_0}\tilde\nu( I)}
  $$
  and
  $$
  \tilde\nu^-(\cdot)=\tfrac{1}{1-\alpha}\left(P^{n_0}\tilde\nu(\cdot)-\alpha\tilde\nu^+(\cdot)\right).
  $$
  Now equation (\ref{e11_09.10.23}) leads
  for $n\ge n_0$ to:
  $$
  \begin{aligned}
  d_W(P^n\tilde\nu_1, P^n\tilde\nu_2)&\le \alpha d_W(P^{n-n_0}\tilde\nu^+_1, P^{n-n_0}\tilde\nu^+_2)\\
  &+(1-\alpha) d_W(P^{n-n_0}\tilde\nu^-_1, P^{n-n_0}\tilde\nu^-_2).
  \end{aligned}
  $$
  By (\ref{e2_27.05.24}) for any $\tilde\nu_1, \tilde\nu_2\in\mathcal P$ we have
  \begin{equation}\label{e3_09.10.23}
  \limsup_{n\to\infty} d_W(P^n\tilde\nu_1, P^n\tilde\nu_2)\le (1-\alpha)\limsup_{n\to\infty} d_W(P^n\tilde\nu^-_1, P^n\tilde\nu^-_2)+\alpha \varepsilon.
  \end{equation}
  Set $\varDelta:=\sup_{\tilde\nu_1, \tilde\nu_2\in\mathcal P} \limsup_{n\to\infty} d_W(P^n\tilde\nu_1, P^n\tilde\nu_2)\le 2$ and note that (\ref{e3_09.10.23}) gives
  $$
  \varDelta\le (1-\alpha)\varDelta+\alpha\varepsilon
  $$
  and consequently $\varDelta\le\varepsilon$. Hence
  $$
  \limsup_{n\to\infty} d_W(P^n\nu_1, P^n\nu_2)\le\varepsilon\quad\text{for every $\nu_1, \nu_2\in\mathcal P$},
  $$
  and since $\varepsilon>0$ was arbitrary, we finally obtain
  $$
  \lim_{n\to\infty} d_W(P^n\tilde\nu_1, P^n\tilde\nu_2)=0
  $$
  for arbitrary measures $\tilde\nu_1, \tilde\nu_2\in\mathcal P$.
 This completes the proof.
 \end{proof}

\section{Synchronization}\label{Synchronization}

We start with the lemma proving that the number of iterations required for attaining a neighbourhood of a point in the support of a $\mu$--invariant measure $\nu$ may be chosen independently of the initial points. In consequence, we may prove that the probability of entering into this neighbourhood is bounded from below.

\begin{lem}\label{l1_14.06.25}
 Let $\Gamma$ satisfy \ref{A+}, and let $\mu$ be a Borel probability measure on $\Gamma$ with $\supp\mu=\Gamma$. Let $\nu$ be a $\mu$--invariant measure, and let $I$ be an open set in $[0, 1]$ such that $I\cap\supp\nu\neq\emptyset$.
Then there exist $K\in\mathbb N$ and $\alpha>0$ such that for every $x, y\in [0, 1]$ one may find a subset $\Gamma^K (x, y)\subset \Gamma^K$ such that $\mu^{\otimes K}(\Gamma^K (x, y))\ge\alpha$ and
\[
g_{1}\circ\cdots\circ g_{K}(x)\in I\quad\text{and}\quad g_{1}\circ\cdots \circ g_{K}(y)\in I
\]
for $(g_{1},\ldots, g_{K})\in \Gamma^K(x, y)$.
\end{lem}

\begin{proof} Let $\nu$ be a $\mu$--invariant measure. Fix an open set $I$ such that $I\cap\supp\nu\neq\emptyset$. By Theorem \ref{thm} the operator $P$ corresponding to $(\Gamma, \mu)$ is asymptotically stable and 
\[
\liminf_{n\to\infty} P^n\delta_0 (I)\ge\nu_*(I)>0,
\]
by the Portmanteau Theorem.
Hence there exists $m\ge 1$ such that
\[
P^{m}\delta_0(I)=\int_{\Gamma^{m}} {\bf 1}_I( g_1\circ\cdots\circ g_m (0))\mu^{\otimes m}(\d (g_1,\ldots, g_m))>0.
\]
Therefore, there exist $(h_1, \ldots, h_m)$ such that $h_1\circ\cdots\circ h_m (0)\in I$ and, by continuity, an open neighbourhood $\Gamma^m(h_1, \ldots, h_m)\subset\Gamma^m$ of $(h_1,\ldots, h_m)$ and $\varepsilon>0$ such that $g_1\circ\cdots\circ g_m([0, \varepsilon))\subset I$ for any $(g_1,\ldots, g_1)\in \Gamma^m(h_1,\ldots, h_m)$. There is no loss of generality in assuming that $\varepsilon<\varepsilon_0$, where
 $\varepsilon_0$ is such that 
\[
\mu(\{g\in\Gamma_0: g(z)<z\,  \text{ for }\, z\in (0, \varepsilon_0)\})>0.
\] 
Since the support of $\mu^{\otimes m}$ is equal to $\Gamma^m$, we obtain 
\[
\alpha_0:=\mu^{\otimes m}(\Gamma^m(h_1,\ldots, h_m))>0.
\]
 
By Lemma \ref{l1_04.10.25} and continuity of all the functions from $\Gamma$ we find for every $u, v\in [0, 1]$ a positive integer $m$, a sequence $(g_1, \ldots, g_m)\in\Gamma_0^m$  and open neighbourhoods $O_u$ of $u$ and $O'_v$ of $v$ such that
\[
g_1\circ\cdots\circ g_m (O_u)\in [0, \varepsilon)\qquad\text{and   }\quad g_1\circ\cdots\circ g_m (O'_v)\in [0, \varepsilon).
\]
Again, by continuity of maps from $\Gamma$, we may assume that the above 
condition holds for some open neighbourhood of $(g_1,\ldots g_m)$, that is, on a $\mu^{\otimes m}$--positive set. 

Consequently, by compactness we may find a finite covering of $[0, 1]\times [0, 1]$ by sets $O_{u_i}\times O'_{v_j}$ such that there exist $m_{i, j}\ge 1$ and subsets $\Gamma^{m_{i, j}} (i, j)\subset \Gamma^{m_{i, j}}$ with $\mu^{\otimes m_{i, j}}(\Gamma^{m_{i, j}} (i, j))>0$ and $f_1\circ\cdots\circ f_{m_{i, j}} (O_{u_i})\subset [0, \varepsilon)$ and $f_1\circ\cdots\circ f_{m_{i, j}} (O'_{v_j})\subset [0, \varepsilon)$ for $(f_1,\ldots, f_{m_{i, j}})\in \Gamma^{m_{i, j}} (i, j)$.

Although the values of $m_{i, j}$ for different pairs $(i, j)$ 
may be different, by adjusting, if necessary,  the iterates of maps 
from the set $\{g\in\Gamma_0: g(z)<z\,  \text{ for }\, z\in (0, \varepsilon_0)\}$, we may assume that all $m_{i, j}$ are equal to some $\kappa$. Indeed, if $f_0\in \{g\in\Gamma_0: g(z)<z\,  \text{ for }\, z\in (0, \varepsilon_0)\}$, then $f_0^l\circ f_1\circ\cdots\circ f_{m_{i, j}} (O_{u_i})\subset [0, \varepsilon)$ and $f_0^l\circ f_1\circ\cdots\circ f_{m_{i, j}} (O'_{v_j})\subset [0, \varepsilon)$ for every $l\ge 1$.
Since the set $\{g\in\Gamma_0: g(z)<z\,  \text{ for }\, z\in (0, \varepsilon_0)\}$ is a $\mu$--positive set, we may also assume that $\mu^{\otimes \kappa}(\Gamma^{\kappa} (i, j))>0$.

Let $K=m+\kappa$. Setting 
$\alpha_1:=\min_{i, j}\mu^{\otimes \kappa}(\Gamma^{\kappa} (i, j))$ and $\alpha:=\alpha_0\cdot\alpha_1$, 
we obtain the desired assertion:
 for every points $x, y\in [0, 1]$  and the set $\Gamma^K(x, y)=\Gamma^m(h_1\ldots h_m)\times \Gamma^{\kappa} (i, j)$, where $i, j$ are such that $x\in O_{u_i}$ and $y\in O'_{v_j}$, we have
\[
g_{1}\circ\cdots\circ g_{K}(x)\in I\quad\text{and}\quad g_{1}\circ\cdots \circ g_{1}(y)\in I
\]
for $(g_{1},\ldots g_{K})\in \Gamma^K(x, y)$. Finally, note that $\mu^{\otimes K}(\Gamma^K(x, y))\ge\alpha_0\cdot\alpha_1=\alpha.$ 
The proof is complete.
\end{proof}

The expected value with respect to $\mu^{\otimes\mathbb N}$ will be denoted by $\mathbb E$. By $\mathbb E$ we shall also denote the expected value with respect to $\mu^{\otimes m}$ if this is not misleading.

Set
\[
S_n(x, y; {\bf g}):=\sum_{i=1}^n | g_i\circ\cdots\circ g_1 (x)-g_i\circ\cdots\circ g_1 (y)|
\]
for $x, y\in [0, 1]$, ${\bf g}=(g_1, \ldots, g_n)\in\Gamma^n $ and $n\in\mathbb N$, and analogously
\[
S_{\infty}(x, y; {\bf g}):=\sum_{n=1}^{\infty} | g_n\circ\cdots\circ g_1 (x)-g_n\circ\cdots\circ g_1 (y)|
\]
for $x, y\in [0, 1]$, ${\bf g}=(g_1, g_2, \ldots)\in\Gamma^{\mathbb N}$.
\vskip3mm

The following proposition is a profound modification of 
Lemma 6 in \cite{G_S}. 

 \begin{prop}\label{p1_28.05.24}
 Let $\Gamma$ satisfy \ref{A+}, and let $\mu$ be a Borel probability measure on $\Gamma$ with $\supp\mu=\Gamma$. 
Then there exists a constant $C$ such that for any $x, y\in [0, 1]$ we have
$$
\mathbb E S_{\infty} (x, y; \cdot)\le C.
$$
\end{prop}

\begin{proof} Let us denote
\[
a_n:=\sup_{x, y\in [0, 1]}\mathbb E S_n(x, y; \cdot)\qquad\text{for $n\in\mathbb N$.}
\]
Obviously $(a_n)_{n\ge 1}$ is a nondecreasing sequence and $a_n\le n$ for $n\in\mathbb N$. By Theorem \ref{T_1_03.05.25} there exist an open interval $I\subset [0, 1]$, $I\cap\supp\nu\neq\emptyset$, a constant $q\in (0, 1)$ and a set $\Gamma_*^{\mathbb N}\subset \Gamma^{\mathbb N}$ with $\beta:=\mu^{\otimes \mathbb N} (\Gamma_*^{\mathbb N})>0$ such that $|g_n\circ\cdots\circ g_1 (I)|\le q^n$ for $(g_1, g_2, \ldots) \in\Gamma_*^{\mathbb N}$. By 
Lemma \ref{l1_14.06.25}, in turn, we may find $K\in\mathbb N$ and $\alpha>0$ such that for every $x, y\in [0, 1]$ there exists $\Gamma^K(x, y)\subset\Gamma^K$ with $\mu^{\otimes K}(\Gamma^K(x, y))>\alpha$ and \[
g_{K}\circ\cdots\circ g_{1}(x)\in I\quad\text{and}\quad g_{K}\circ\cdots \circ g_{1}(y)\in I
\]
for $(g_{1},\ldots g_{K})\in \Gamma^K(x, y)$.

For $n > K$ and arbitrary $x, y\in [0, 1]$ we have
\begin{align}
\begin{split}\label{eq:partsum}
&\mathbb E S_n(x, y; \cdot)=
\int_{\Gamma^n} S_n(x, y; {\bf g})\mu^{\otimes n}(\d {\bf g}) \\
& \le \int_{\Gamma^{n-K}}\int_{\Gamma^K (x, y)} (K+ S_{n-K} (\tilde {\bf g} (x), \tilde {\bf g} (y); {\bf g}))\mu^{\otimes K}(\d \tilde {\bf g})\mu^{\otimes n-K}(\d {\bf g})\\
&+ \int_{\Gamma^{n-K}}\int_{(\Gamma^K\setminus \Gamma^K(x, y))} (K+ S_{n-K} (\tilde {\bf g} (x), \tilde {\bf g} (y); {\bf g}))\mu^{\otimes K}(\d \tilde {\bf g})\mu^{\otimes n-K}(\d {\bf g})\\
&\le 
\int_{\Gamma^{n-K}}\int_{\Gamma^K(x, y)} (K+ S_{n-K} (\tilde {\bf g} (x), \tilde {\bf g} (y); {\bf g}))\mu^{\otimes K}(\d \tilde {\bf g})\mu^{\otimes n-K}(\d {\bf g})\\
&+(K+a_n) \mu^{\otimes n} ((\Gamma^K\setminus \Gamma^K (x, y)\times\Gamma^{n-K}),\\
\end{split}
\end{align}
the last inequality by the fact that $a_{n-K}\le a_n$.

In the sequel we shall use the following notation. By $\Gamma_*^m$ we denote the projection of $\Gamma_*^{\mathbb N}$ on product $\Gamma^m$, $m\in\mathbb{N}$, i.e.,
\begin{displaymath}
\Gamma_*^m := \left\{ (g_1,\ldots, g_m)\in \Gamma^m : (g_1,\ldots, g_m)\times \Gamma^\mathbb{N}\cap \Gamma_*^{\mathbb N} \neq \emptyset \right\}.
\end{displaymath}
For arbitrary $z,w\in I$ and $r \le n$ we have
\begin{align*}
 \mathbb E S_r(z, w; \cdot) &= \int_{\Gamma^r} S_r(z, w; {\bf g})  \mu^{\otimes r} (\d {\bf g}) 
 = \int_{\Gamma_*^r} S_r(z, w; {\bf g})  \mu^{\otimes r}(\d {\bf g})\\
&+
 \int_{\Gamma^r \setminus \Gamma_*^r} S_r(z, w; {\bf g}) \mu^{\otimes r}(\d {\bf g})
=: \text{I+II}.
\end{align*}
Firstly we evaluate integral II. For $k < r$ we set
\begin{displaymath}
\Gamma_k^r := \left\{ (g_1,\ldots,g_r)\in \Gamma^r : (g_1,\ldots, g_{k-1})\in\Gamma_*^{k-1}, (g_1,\ldots, g_{k})\notin\Gamma_*^k \right\}.
\end{displaymath}
The set $\Gamma_k^r$ contains all sequences which have the first $k-1$ elements from some sequence of $\Gamma_*$ but on $k$-th position they are different. Note that $\Gamma^r\setminus \Gamma_*^r = \bigcup\limits_{k=1}^r \Gamma_k^r$ and $\Gamma_i^r\cap \Gamma_j^r = \emptyset$ if $i\neq j$. We easily see that $\mu^{\otimes r}(\Gamma^r\setminus \Gamma_*^r)\le 
\mu^{\otimes \mathbb N}(\Gamma^{\mathbb N}\setminus\Gamma_*)=1-  \mu^{\otimes \mathbb N}(\Gamma_*)\le 1-\beta$.
Further, let ${\Gamma_k^r}_{|k}$ be a projection of $\Gamma_k^r$ on $k$ first elements, i.e.,
\begin{displaymath}
{\Gamma_k^r}_{|k} := \left\{ (g_1,\ldots, g_k)\in \Gamma^k : \exists (g_1,\ldots, g_{k}, g_{k+1},\ldots, g_r)\in \Gamma_k^{r}\right\}.
\end{displaymath}
Then we see that $\Gamma_k^r = {\Gamma_k^r}_{|k} \times \Gamma^{r-k}$ and
\[
\text{II} =\int_{\bigcup\limits_{k=1}^r \Gamma_k^r} S_r(z, w; {\bf g})\mu^{\otimes r}(\d {\bf g})= \sum\limits_{k=1}^r \int_{\Gamma_k^r} S_r(z, w; {\bf g})\mu^{\otimes r}(\d {\bf g}) .
\]
Let us consider one term of the sum above. For $k\in \{1,\ldots r\}$ we have from Fubini's theorem
\[
\begin{split}\label{eq:Gamma_k^r}
&  \int_{\Gamma_k^r} S_r(z, w; {\bf g}) \mu^{\otimes r}(\d {\bf g})\\
& = \int_{{\Gamma_k^r}_{|k}}\left(\int_{\Gamma^{r-k}}S_r(z, w; {\bf g}) \mu(\d g_{k+1})\cdots \mu(\d g_r)\right)\mu^{\otimes k}(\d {\bf g}).
\end{split}
\]
Now to evaluate the inner integrals we set $\hat{\bf g}=(g_1,\ldots, g_k)$ and $\check{\bf g}=(g_{k+1},\ldots, g_r)$, and let $\hat z: =g_k\circ\dots\circ g_1(z)$ and $\hat w: =g_k\circ\dots\circ g_1(w)$. Then we have
\begin{align*}
& \int_{\Gamma^{r-k}} S_r(z, w; {\bf g}) \mu^{\otimes r-k}(\d \check{\bf g})\\
& = \idotsint\limits_{\Gamma^{r-k}} \left( S_k(z, w; \hat{\bf g})+S_{r-k-1}(\hat z, \hat w; \check{\bf g})\right) \mu(\d g_{k+1})\cdots \mu(\d g_r)\\
& = S_k(z, w; \hat{\bf g})+\idotsint\limits_{\Gamma^{r-k}} S_{r-k-1}(\hat z, \hat w; \check{\bf g}) \mu(\d g_{k+1})\cdots \mu(\d g_r)
\end{align*}
and consequently
\begin{align*}
&  \int_{\Gamma_k^r} 
S_r(z, w; {\bf g}) \mu^{\otimes r}(\d {\bf g})  \\
&=  \int_{{\Gamma_k^r}_{|k}}\left(S_k(z, w; \hat{\bf g})+\int_{\Gamma^{r-k}} S_{r-k}(\hat z, \hat w; \check{\bf g}) \mu^{\otimes (r-k) }(\d \check{\bf g})\right)
\mu^{\otimes k}(\d \hat{\bf g})  \\
&\le\int_{{\Gamma_k^r}_{|k}} ( S_k(z, w; \hat{\bf g}) +a_{r-k})\mu^{\otimes k}(\d \hat{\bf g})
\le\int_{{\Gamma_k^r}_{|k}} ( S_k(z, w; \hat{\bf g}) +a_{r})\mu^{\otimes k}(\d \hat{\bf g})\\
& \le \left ( (q + q^2 + \ldots + q^{k}) + a_{r}\right) \mu^{\otimes k} ({\Gamma_k^r}_{|k})\le \left(\tfrac{q}{1-q}+a_{r}\right) \mu^{\otimes k}({\Gamma_k^r}_{|k}).
\end{align*}
Therefore,
\begin{align*}
\text{II} & \le 
 \sum\limits_{k=1}^r \int_{\Gamma_k^r} S_r(z, w; {\bf g}) \mu^{\otimes r}(\d {\bf g})  \le 
\sum_{k=1}^r\left(\tfrac{q}{1-q}+a_{r}\right) \mu^{\otimes k}({\Gamma_k^r}_{|k})
\\
& \le \tfrac{q}{1-q} + a_r \sum\limits_{k=1}^r \mu^{\otimes k}({\Gamma_k^r}_{|k})
 \le \tfrac{q}{1-q} + a_r (1-\beta).
\end{align*}

On the other hand, integral I can be easily evaluated by contractivity. Indeed, we have
\begin{displaymath}
\text{I} =\int_{\Gamma_*^r}S_r(z, w; {\bf g}) \mu^{\otimes r}(\d {\bf g}) \le \left(q + q^2 + \ldots + q^{r}\right) \le \tfrac{q}{1-q}.
\end{displaymath}
Hence for $z,w \in I$
\begin{equation}\label{e1_18.06.25}
\mathbb E S_r(z, w; \cdot) \le \text{I} + \text{II} \le \tfrac{2q}{1-q} + a_r(1-\beta).
\end{equation}

Now from (\ref{eq:partsum}) and (\ref{e1_18.06.25}) for $n>K$, by Fubini's theorem,  we obtain
\[
\begin{split}
\mathbb E S_n(x, y; \cdot)
&\le 
\int_{\Gamma^{n-K}}\int_{\Gamma^K( x, y)} \left(K+ S_{n-K} (\tilde {\bf g} (x), \tilde {\bf g} (y); {\bf g})\right)\mu^{\otimes K}(\d \tilde {\bf g})\mu^{\otimes n-K}(\d {\bf g})\\
&+(K+a_n) \mu^{\otimes n} ((\Gamma^K\setminus \Gamma^K (x, y))\times\Gamma^{n-K})\\
&=\int_{\Gamma^K (x, y)} (K+\int_{\Gamma^{n-K}} S_{n-K} (\tilde {\bf g} (x), \tilde {\bf g} (y); {\bf g})\mu^{\otimes n-K}(\d {\bf g})) \mu^{\otimes K}(\d \tilde {\bf g})\\
&+(K+a_n) \mu^{\otimes K} (\Gamma^K\setminus \Gamma^K (x, y))\\
&\le \mu^{\otimes K}(\Gamma^K (x, y)) (K+\tfrac{2q}{1-q} +a_n (1-\beta))+(K+a_n) (1-\mu^{\otimes K}(\Gamma^K (x, y))\\
&\le  a_n (1-\beta\mu^{\otimes K}(\Gamma^K (x, y)))+ 2K+\tfrac{2q}{1-q}\le (1-\alpha\beta) a_n + 2K+\tfrac{2q}{1-q}
\end{split}
\]
and since $x, y\in [0, 1]$ was arbitrary, taking supremum on the left hand side over $x, y$, yields
\[
a_n\le  (1-\alpha\beta) a_n + 2K+\tfrac{2q}{1-q}\quad \text{ for $n>K$.} 
\]
Since $a_n\le K$ for $n\le K$ the above inequality is satisfied for all $n\in\mathbb N$. Thus
\[
a_n\le \frac{2K+\tfrac{2q}{1-q}}{\alpha\beta}:=C\qquad\text{for $n\ge 1$}
\]
and by the Lebesgue monotone convergence theorem
\[
\mathbb E S_{\infty} (x, y; \cdot)\le \lim_{n\to\infty} a_n\le C\quad\text{ for $x, y\in [0, 1]$. }
\]
This completes the proof.
\end{proof}

\begin{rem} Under the assumptions of Proposition \ref{p1_28.05.24} the system $(\Gamma, \mu)$ satisfies the synchronization condition, i.e., for every $x, y\in [0, 1]$
\[
| g_n\circ\cdots\circ g_1 (x)- g_n\circ\cdots\circ g_1 (y)|\to 0
\]
for $\mu^{\otimes \mathbb N}$--a.e. $(g_1, g_2,\ldots)\in\Gamma^{\mathbb N}$. Indeed, we have
\[
\int_{\Gamma^{\mathbb N}}\left(\sum_{n=1}^{\infty}  | g_n\circ\cdots\circ g_1 (x)- g_n\circ\cdots\circ g_1 (y)|\right)\mu^{\otimes \mathbb N} (\d (g_1, g_2, \ldots))\le C
\]
and hence $\sum_{n=1}^{\infty}  | g_n\circ\cdots\circ g_1 (x)- g_n\circ\cdots\circ g_1 (y)|<\infty$ for $\mu^{\otimes \mathbb N}$--a.e. $(g_1, g_2,\ldots)\in\Gamma^{\mathbb N}$. This implies the synchronization of the system.
\end{rem}

With the same proof as in Proposition \ref{p1_28.05.24} we may also obtain:

\begin{rem}\label{r1_21.06.25}
Let $\varphi\colon [0, 1]\to\mathbb R$ be an arbitrary H\"older function. Set
\[
S_n^{\varphi} (x, y; {\bf g}):=\sum_{i=1}^n |\varphi(g_i\circ\cdots\circ g_1 (x)) - \varphi(g_i\circ\cdots\circ g_1 (y))|
\]
for $x, y\in [0, 1]$, ${\bf g}=(g_1, \ldots, g_n)\in\Gamma^n $ and $n\ge 1$, and analogously
\[
S_{\infty}^{\varphi} (x, y; {\bf g}):=\sum_{n=1}^{\infty} | \varphi(g_n\circ\cdots\circ g_1 (x))-\varphi(g_n\circ\cdots\circ g_1 (y))|
\]
for $x, y\in [0, 1]$, ${\bf g}=(g_1, g_2, \ldots)\in\Gamma^{\mathbb N}$. If the assumptions of Proposition \ref{p1_28.05.24} hold, then 
there exists a constant $C$ such that for any $x, y\in [0, 1]$ we have
$$
\mathbb E S_{\infty}^{\varphi} (x, y; \cdot)\le C.
$$
\end{rem}

By use of Proposition \ref{p1_28.05.24} we may give a straightforward proof of the next lemma.

\begin{lem}\label{l1_7.08.24}
Let $\Gamma$ satisfy \ref{A+}, and let $\mu$ be a Borel probability measure on $\Gamma$ with $\supp\mu=\Gamma$.
Assume that $(\Gamma, \mu)$ is $\mu$--injective, and let $\nu$ be its unique $\mu$--invariant measure. If
$\varphi: [0, 1]\to\mathbb R$ is a centered Lipschitz function, then the function $\Psi(x):= \sum_{n=0}^{\infty} U^n \varphi(x)$ for $x\in [0, 1]$ is a well defined bounded function, and consequently $\Psi\in L^2(\nu)$. Moreover $\varphi=\Psi -U\Psi$ ($\varphi$ is a $L^2(\nu)$--coboundary).
\end{lem}

\begin{proof} Fix a Lipschitz function $\varphi: [0, 1]\to\mathbb R$, and let $L_{\varphi}$ denotes its Lipschitz constant. Let  $C>0$ be the constant derived in Proposition \ref{p1_28.05.24}.
We have
 \[
 \begin{aligned}
 \sum_{i=0}^{n} |U^i \varphi(x)|&=
\sum_{i=0}^{n} |U^i \varphi(x)-\langle P^i\nu, \varphi\rangle|\\
&\le \sum_{i=0}^{n} \int_{[0, 1]}|U^i \varphi(x)-U^i \varphi(z)|\nu(\d z)\\
&\le L_{\varphi} \int_{[0, 1]}  \mathbb E S_n(x, z; \cdot)\nu(\d z)\le  L_{\varphi} C.
\end{aligned}
\]
Thus $\Psi(x):= \sum_{n=0}^{\infty} U^n \varphi(x)$ for $x\in [0, 1]$ is a well defined bounded function, and consequently $\Psi\in L^2(\nu)$. It is immediate to check that
$\Psi-U\Psi=\varphi$. 
The proof is complete.\end{proof}

\section{Central Limit Theorem}\label{sec: clt}

In the last section we are concerned with the central limit theorem for the corresponding Markov chain. Having the stochastic dynamical system $(\Gamma, \mu)$ and the  Markov operator $P$ and its pre-dual $U$ we define  the  Markov chain $(\xi_n)_{n\ge 0}$ in the following way: the law of $(\xi_n)_{n\ge 0}$ with initial distribution $\nu_0$ is the probability measure $\mathbb{P}_{\nu_0}$ on $\left( [0, 1]^\mathbb{N}, \mathcal{B}([0, 1])^{\otimes\mathbb{N}} \right)$ such that
\begin{displaymath}
\mathbb{P}_{\nu_0}(\xi_{n+1}\in A | \xi_n = x) = P\delta_x(A) \quad \text{and} \quad \mathbb{P}_{\nu_0}(\xi_0\in A) = \nu_0(A),
\end{displaymath}
where $x\in [0, 1]$ and $A\in\mathcal{B}([0, 1])$. The existence of $\mathbb{P}_{\nu_0}$ follows from the Kolmogorov extension theorem. When an initial distribution $\nu_0$ is equal to a $\mu$--invariant measure, the Markov chain is called stationary.

We have
\[
\text{if $\xi_0=x$\,\,\, then}\quad \xi_n(\omega)=g_n\circ\cdots\circ g_1 (x)\quad\text{for  $\omega=(g_1, g_2, \ldots)\in\Gamma^{\mathbb N}$}
\]

The expectation with respect to $\mathbb P_{\nu_0}$ is denoted by $\mathbb E_{\nu_0}$. For $\nu_0=\delta_x$, the Dirac measure at $x\in [0, 1]$, we write just $\mathbb P_x$ and $\mathbb E_x$.
Obviously $\mathbb P_{\nu_0}(\cdot)=\int_{[0, 1]}\mathbb P_x(\cdot)\nu_0(\d x)$ and $\mathbb E_{\nu_0}(\cdot)=\int_{[0, 1]}\mathbb E_x(\cdot)\nu_0(\d x)$. 
We will prove that the stochastic process $(\varphi(\xi_n))_{n\ge 0}$, where $(\xi_n)_{n\ge 0}$ denotes the corresponding stationary Markov chain with initial distribution $\nu$ ($\nu$ is a $\mu$-invariant measure) and $\varphi$ is a {\it centered Lipschitz function} on $[0, 1]$, i.e.,
$\int_{[0, 1]}\varphi(x)\nu(\d x)=0$, satisfies the {central limit theorem}, i.e., 
\[
\sigma^2:=\lim_{n\to\infty} \mathbb{E}_{\nu}\bigg( \frac{\varphi(\xi_1)+\cdots+\varphi(\xi_n)}{\sqrt{n}} \bigg)^2
\]
exists and
\[
\frac{\varphi(\xi_1)+\cdots+\varphi(\xi_n)}{\sqrt{n}}\Rightarrow \mathcal{N}(0,\sigma^2)\qquad\text{as $n\to\infty$,}
\]
where $\Rightarrow$  denotes convergence in distribution.
Moreover, we will be able to show that  $(\varphi(\xi^x_n))_{n\ge 1}$, where $(\xi^x_n)_{n\ge 1}$ is the corresponding Markov chain starting from an arbitrary point $x\in [0, 1]$, satisfies the {central limit theorem from every point}, i.e.,
$$
\frac{\varphi(\xi_1^x)+\cdots+\varphi(\xi_n^x)}{\sqrt{n}}\Rightarrow \mathcal{N}(0,\sigma^2)\qquad\text{as $n\to\infty$}.
$$
Therefore, if $\sigma^2>0$ we have
\begin{equation}\label{c1_03.02.21}
\begin{aligned}
\lim_{n\to\infty} \mathbb \mu^{\otimes\mathbb N}&\left( \left\{(g_1, g_2\ldots)\in\Gamma^{\mathbb N} : \frac{\varphi(g_1(x))+\cdots+\varphi(g_n\circ\cdots\circ g_1(x))}{\sqrt{n}}<a\right\}\right)\\
&=\frac{1}{\sqrt{2\pi\sigma^2}}\int_{-\infty}^a e^{-\frac{y^2}{2\sigma^2}}\d y
\end{aligned}
\end{equation}
for all $a\in\mathbb R$ and $x\in [0, 1]$. If $\sigma=0$, the sequence
\[
\frac{\varphi(g_1(x))+\cdots+\varphi(g_n\circ\cdots\circ g_1(x))}{\sqrt{n}}
\]
converges in distribution to $0$.
\vskip3mm
For $\omega=(g_1, g_2, \ldots)\in\Gamma^{\mathbb N}$ set
$$
{\mathfrak S}_n(\varphi, \omega, x):=\frac{\varphi(g_1(x))+\cdots+\varphi(g_n\circ\cdots\circ g_1(x))}{\sqrt{n}}.
$$

Finally, we are in a position to formulate and prove the central limit theorem. Having proved Proposition \ref{p1_28.05.24} its proof will be straightforward.

\begin{thm}[Theorem \ref{thm: CLT intro}]\label{thm: clt}
Let $\Gamma$ satisfy \ref{A+}, and let $\mu$ be a Borel probability measure on $\Gamma$ with $\supp\mu=\Gamma$.
Moreover, assume that $(\Gamma, \mu)$ is $\mu$--injective. Let $(\xi_n)_{n\ge 0}$ be the corresponding stationary Markov chain with initial distribution $\nu$.
Then for any centered Lipschitz function $\varphi: [0, 1]\to\mathbb R$ the random process $(\varphi(\xi_n))_{n\ge 1}$ satisfies the central limit theorem.
Moreover, the same is true for the process $(\varphi(\xi^x_n))_{n\ge 1}$, where $(\xi^x_n)_{n\ge 1}$ is the corresponding Markov chain starting from an arbitrary point $x\in [0, 1]$.
\end{thm}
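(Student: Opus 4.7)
The plan is to combine the $L^2(\nu)$-coboundary property of Lemma \ref{l1_7.08.24} with the Gordin--Lif\v{s}ic martingale approximation scheme to derive the CLT along the stationary chain, and then to upgrade it to the pointwise CLT using the asymptotic stability of Theorem \ref{thm}.

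By Corollary \ref{c1_21_05_24} the system admits a unique $\mu$--invariant probability measure $\nu$, and Lemma \ref{l1_7.08.24} gives $\varphi = \psi - U\psi$ for some $\psi \in L^2(\nu)$. Setting
$$
M_k := \psi(X_k) - U\psi(X_{k-1}) = \psi(X_k) - \mathbb{E}[\psi(X_k)\mid\mathcal F_{k-1}],\qquad k\ge 1,
$$
with $\mathcal F_k := \sigma(X_0,\ldots,X_k)$, the sequence $(M_k)$ is a martingale difference sequence under any initial distribution, and telescoping yields the fundamental identity
$$
\sum_{k=0}^{n-1} \varphi(X_k) \;=\; \sum_{k=1}^{n} M_k \;+\; \psi(X_0) - \psi(X_n).
$$

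Under $\mathbb P_\nu$ the chain is stationary and ergodic (ergodicity follows from uniqueness of $\nu$), so $(M_k)$ is a stationary ergodic $L^2$ martingale difference sequence. The classical CLT for such sequences then gives $\tfrac{1}{\sqrt n}\sum_{k=1}^n M_k \Rightarrow \mathcal N(0,\sigma^2)$ with $\sigma^2 := \mathbb E_\nu[M_1^2]$, while the boundary term $(\psi(X_0) - \psi(X_n))/\sqrt n$ vanishes in $L^2(\mathbb P_\nu)$ by stationarity of $\psi(X_k)$. This proves the stationary CLT.

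To obtain the CLT from an arbitrary starting point $x\in [0,1]$, the same decomposition is applied under $\mathbb P_x$. The sequence $(M_k)$ remains a martingale difference sequence but is no longer stationary; the convergence of normalised conditional variances $\tfrac1n\sum_{k=1}^n \mathbb E_x[M_k^2\mid\mathcal F_{k-1}] \to \sigma^2$ in probability, together with the conditional Lindeberg condition, is verified by approximating $\psi\in L^2(\nu)$ uniformly by continuous functions and transferring the corresponding integrals from $\mathbb P_x$ to $\mathbb P_\nu$ via the weak convergence $P^n\delta_x \to \nu$ supplied by Theorem \ref{thm}. The non-stationary martingale CLT then produces the Gaussian limit, and the boundary contribution $(\psi(x) - \psi(X_n))/\sqrt n$ tends to $0$ in probability because the laws of $X_n$ under $\mathbb P_x$ are tight and weakly converge to $\nu$. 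The main technical obstacle lies exactly here: $\psi$ is only defined up to $\nu$-null sets, hence has no intrinsic pointwise meaning off $\supp\nu$, so the values $\psi(X_k)$ under $\mathbb P_x$ are \emph{a priori} delicate. The remedy, and the heart of the argument, is approximation of $\psi$ by continuous functions combined with the Wasserstein convergence provided by Theorem \ref{thm}, which forces trajectories issued from $x$ to equidistribute according to $\nu$ and consequently makes all integrals along the chain converge to their $\nu$-stationary counterparts.
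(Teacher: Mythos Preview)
Your treatment of the stationary CLT coincides with the paper's: both invoke Lemma~\ref{l1_7.08.24} to write $\varphi=\psi-U\psi$ with $\psi\in L^2(\nu)$ and then appeal to Gordin--Lif\v{s}ic. That part is fine.

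The gap is in the quenched statement. You correctly flag that $\psi$ is only a $\nu$-equivalence class, but the proposed remedy does not close the hole. Under $\mathbb P_x$ the law of $X_k$ is $P^k\delta_x$, which for finitely many piecewise monotone maps is a finite combination of Dirac masses and is typically singular with respect to $\nu$; thus $\psi(X_k)$, the differences $M_k$, and hence the whole decomposition you write down have no meaning under $\mathbb P_x$. Approximating $\psi$ in $L^2(\nu)$ by continuous $\psi_m$ does not help: the resulting $\varphi_m:=\psi_m-U\psi_m$ approximate $\varphi$ only in $L^2(\nu)$, not uniformly or along the (possibly $\nu$-singular) orbit of $x$, so neither the conditional variances $\tfrac1n\sum\mathbb E_x[M_k^2\mid\mathcal F_{k-1}]$ nor the boundary term $\psi(X_n)/\sqrt n$ can be controlled this way. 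Weak convergence $P^n\delta_x\to\nu$ alone is far too coarse to force these quantities to their stationary values.

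The paper avoids evaluating $\psi$ off $\supp\nu$ altogether. From Gordin--Lif\v{s}ic it keeps only the conclusion that the characteristic functions converge for $\nu$-a.e.\ starting point, and then transfers this to every $u\in[0,1]$ by an argument that involves only the Lipschitz observable $\varphi$: the local exponential contraction $|g_n\circ\cdots\circ g_1(I_x)|\le e^{-n\tilde h}$ from \cite{BOS} gives $|S_n(\varphi,\omega,z)-S_n(\varphi,\omega,x)|\le \tfrac{|t|L_\varphi}{\sqrt n}\sum_k e^{-k\tilde h}\to 0$ for $z\in I_x$, so the CLT propagates from a $\nu$-full set to an open set $\hat I$; asymptotic stability (Theorem~\ref{thm}) then ensures $P^N\delta_u(\hat I)>1-\varepsilon$, and a direct estimate of characteristic functions finishes. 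The contraction estimate, not an $L^2$ approximation of $\psi$, is the missing ingredient in your argument.
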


\begin{proof} Fix a centered Lipschitz function $\varphi: [0, 1]\to\mathbb R$. Since $\varphi$ is an $L^2(\nu)$--coboundary of $U$, by Lemma \ref{l1_7.08.24},  $\sigma^2$ exists and is finite, by \cite{Gordin_Lifsic}. Moreover, condition~(\ref{c1_03.02.21}) holds for $\nu\almost \; x_0\in [0, 1]$, by Gordin and Lif\v{s}ic (see Section IV.8 in \cite{B_I}, see also \cite{D_L}). In particular, for $\nu\almost\; x_0\in [0, 1]$ we have
\begin{equation}\label{e1_28.05.24}
\lim_{n\to\infty}\int_{\Gamma^{\mathbb N}}\exp( it {\mathfrak S}_n(\varphi, \omega, x_0)) \mu^{\otimes\mathbb N} (\d\omega)=\exp\left(-\tfrac{1}{2}t^2\sigma^2\right)\quad\text{for $t\in\mathbb R$}.
\end{equation}
Fix such $x_0$ such that (\ref{e1_28.05.24}) holds and observe that for any $x\in [0, 1]$ we have
\begin{equation}\label{e1_21.06.25}
\begin{aligned}
&\left|\int_{\Gamma^{\mathbb N}}\exp( it {\mathfrak S}_n(\varphi, \omega, x)) \mu^{\otimes\mathbb N} (\d\omega)-\int_{\Gamma^{\mathbb N}}\exp( it {\mathfrak S}_n(\varphi, \omega, x_0)) \mu^{\otimes\mathbb N} (\d\omega)\right|\\
&\le \tfrac{L_{\varphi}}{\sqrt n}  \sum_{i=1}^{n}\mathbb E S_i(x, x_0; \cdot) \le \tfrac{L_{\varphi} C}{\sqrt n},
\end{aligned}
\end{equation}
where $C>0$ is the constant derived in Proposition \ref{p1_28.05.24} and $L_{\varphi}$ denotes the Lipschitz constant of the function $\varphi$. Thus convergence (\ref{e1_28.05.24}) holds for every $x\in [0, 1]$ and the proof is completed.
\end{proof}

\begin{remark} 
The above central limit theorem holds for any centered H\"older function $\varphi: [0, 1]\to\mathbb R$. Indeed, in evaluating inequality (\ref{e1_21.06.25}) we use then Remark \ref{r1_21.06.25} instead of Proposition \ref{p1_28.05.24}. In the proof that $\varphi$ is an $L^2$--coboundary (a version of Lemma \ref{l1_7.08.24}) we also make use of Remark \ref{r1_21.06.25}.
\end{remark}

\begin{remark} There are plenty of examples which fulfill the conditions of the foregoing theorems. To illustrate that the choice of the distribution $\mu$ is crucial, consider Example~\ref{ex: 3maps}. Condition~\ref{A+} is satisfied. If  $p\geq \frac{2}{5}$, then the system is $\mu$--injective, thus there exists a unique $\mu$--invariant Borel probability measure (Corollary~\ref{c1_21_05_24}). This measure is $\mu$--proximal (Theorem~\ref{thm_21_05_24}) and atomless (Theorem~\ref{thm: atomless}). Moreover, the Markov operator corresponding to that system is asymptotically stable (Theorem~\ref{thm}) and the central limit theorem described in Theorem~\ref{thm: clt} holds.\\
  But if on the other hand $0<p<\frac{1}{4}$, then there are at least two different ergodic $\mu$--invariant Borel probability measures on $[0,1]$, by Proposition~4.4 in \cite{BOS}.
\end{remark}

\end{document}